\title{\vskip-1.0em\sc Weak and cyclic amenability for Fourier algebras of connected Lie groups}
\author{\sc Y.Choi and M. Ghandehari}
\date{4th March 2014}
\newcommand{\dt}[1]{\textcolor{Bittersweet}{\sf#1}}
\renewcommand{\sch}{{\operatorname{Sch}}}
\newcommand{\para}[1]{\paragraph{#1.}}  
\begin{document}

\maketitle

\begin{abstract}
Using techniques of non-abelian harmonic analysis, we construct an explicit, non-zero cyclic derivation on the Fourier algebra of the real $ax+b$ group. In particular this provides the first proof that this algebra is not weakly amenable.
Using the structure theory of Lie groups, we deduce that the Fourier algebras of connected, semisimple Lie groups also support non-zero, cyclic derivations and are likewise not weakly amenable. Our results complement earlier work of Johnson (JLMS, 1994), Plymen (unpublished note) and Forrest--Samei--Spronk (IUMJ 2009).

As an additional illustration of our techniques, we construct an explicit, non-zero cyclic derivation on the Fourier algebra of the reduced Heisenberg group, providing the first example of a connected nilpotent group whose Fourier algebra is not weakly amenable.

\medskip
\noindent
MSC 2010: Primary 43A30; Secondary 46J10, 47B47.
\end{abstract}


\begin{section}{Introduction}
\subsection*{Background context and history}
The study of derivations from Banach algebras into Banach bimodules has a long history. In many cases, where the algebra consists of functions on some manifold or well-behaved subset of Euclidean space, and the target bimodule is symmetric, continuous derivations can be constructed by taking weighted averages of derivatives of functions. This leads to examples where the algebra does not admit any non-zero continuous `point' derivations, yet admits a non-zero continuous derivation into \emph{some} symmetric Banach bimodule; this is often a manifestation of some kind of vestigial analytic structure or H\"older continuity of functions in the algebra. Commutative Banach algebras which admit no non-zero, continuous derivations into \emph{any} symmetric Banach bimodule are said to be \dt{weakly amenable}: the terminology was introduced by W. G. Bade, P. C. Curtis Jr.~and H. G. Dales in \cite{BCD_WA}, where they studied some key examples in detail.

One natural class of function algebras not considered in \cite{BCD_WA} is the class of Fourier algebras of locally compact groups, first defined in full generality by P. Eymard in \cite{eymard64}. Fourier algebras never admit any non-zero, continuous point derivations: this follows from e.g.~\cite[(4.11)]{eymard64}. Moreover, if $G$ is a locally compact abelian group, its Fourier algebra $\FA(G)$ is isomorphic via the classical Fourier transform to the convolution algebra $L^1(\widehat{G})$, and so by results of B.~E.~Johnson it is \emph{amenable}, hence weakly amenable (\cite[Proposition 8.2]{BEJ_CIBA}). 

Now a recurring theme in abstract harmonic analysis, and the study of Fourier algebras in particular, is the hope that known results for locally compact abelian groups can be generalized in a natural way to the class of locally compact amenable groups. It was therefore something of a surprise when Johnson, in \cite{BEJ_AG}, constructed a non-zero bounded derivation from $\FA(\SO_3(\Real))$ to its dual. Subsequently, using the structure theory of semisimple Lie algebras, R. J. Plymen observed \cite{Plymen_AG} that Johnson's construction can be transferred to yield non-zero continuous derivations on $\FA(G)$ for any non-abelian, compact, connected Lie group. This was extended further by B. E. Forrest, E. Samei and N. Spronk in \cite{FSS_WAAG}, who showed using structure theory for compact groups that Plymen's result remains valid if one drops the word ``Lie''.

The non-compact, connected case has received relatively little attention. The articles \cite{Plymen_AG} and \cite{FSS_WAAG} ultimately rely on locating closed copies of $\SO_3(\Real)$ or $\SU_2(\Cplx)$ inside the group in question, and then transporting Johnson's derivation along the corresponding restriction homomorphism of Fourier algebras. Indeed, as far as the present authors are aware, all results to date which show that $\FA(G)$ fails to be weakly amenable only work for those $G$ containing compact, connected, non-abelian subgroups. This has left open several natural examples, such as the ``real $ax+b$ group'' (to be defined precisely in Section~\ref{s:ax+b}), or certain semisimple Lie groups such as $\SL_2(\Real)$ and its covering groups.

\subsection*{Pr\'ecis of our results}
Our first main result is the construction of an explicit, non-zero, continuous derivation on the Fourier algebra of the real $ax+b$ group, by which we mean the connected component of the affine group over $\Real$. Note that this group does not contain any non-trivial connected compact subgroups, let alone ones which are non-abelian.
As with the example constructed in~\cite{BEJ_AG}, our derivation is also \emph{cyclic}, that is, it satisfies the identity $D(a)(b)=-D(b)(a)$ for all $a$ and $b$ in the algebra.

The construction of our example, and the verification that it has the right properties, take up all of Section~\ref{s:ax+b}. Our approach is different from that of \cite{FSS_WAAG}, and proceeds by exploiting orthogonality relations for coefficient functions of irreducible representations. Thus, to motivate our approach, we preface our construction with an expository section (Section~\ref{s:su2}) where we revisit Johnson's original construction in \cite{BEJ_AG} and give a slightly different presentation of how the desired norm bounds can be obtained.
It seems to have gone under-emphasised that the construction is very concrete and can be explained independently of other, harder results in~\cite{BEJ_AG}.

Our original motivation for considering the $ax+b$ group is that although it is non-compact, it shares one feature with compact groups: the left regular representation can be decomposed as a \emph{direct sum} of unitary representations that are quasi-equivalent to \emph{irreducible} ones.
 (In more precise technical language: this group is an \dt{AR-group}.)
It follows that its Fourier algebra admits a convenient direct sum decomposition analogous to those of compact groups, where functions in the summands satisfy orthogonality relations.

For more general connected groups, we cannot expect to have such detailed knowledge of the unitary dual (and for groups that are not of Type~I, we cannot hope for a well-behaved decomposition of the Fourier algebra in terms of irreducible representations). Nevertheless, by combining our results for the $ax+b$ group with some structure theory for connected Lie groups, we show that $\FA(G)$ fails to be weakly amenable if $G$ is a semisimple, connected Lie group.
Further structure theory allows us to deduce that if $G$ is a simply connected Lie group whose Fourier algebra is weakly amenable, then $G$ must be solvable (and satisfy some extra conditions).
Details are given in Section~\ref{s:lie-struct}.

Since the results just mentioned (and those of \cite{FSS_WAAG}) tell us nothing about the cases of nilpotent Lie groups, in 
Section~\ref{s:reduced-H} we outline how one can use techniques similar to those of Section~\ref{s:ax+b} to construct an explicit, non-zero, continuous and cyclic derivation on the Fourier algebra of the reduced Heisenberg group $\bbH_r$. This group may be viewed as the quotient of the usual $3$-dimensional Heisenberg group $\bbH$ by a discrete subgroup of its centre. In forthcoming work we will address the question of weak amenability of $\FA(\bbH)$, using the Plancherel theorem for $\bbH$ as a substitute for orthogonality relations.

\subsection*{Acknowledgements}
The first author was supported by NSERC Discovery Grant 402153-2011. He thanks E. Samei and N. Spronk for several enlightening conversations over the years about the article~\cite{BEJ_AG}, and various contributors to the {\it MathOverflow}\/ website for tolerating simple-minded questions about Lie groups and their structure theory.
The second author thanks K.~F. Taylor for discussions on square-integrable representations in the non-unimodular setting.

The work presented here was done while both authors worked at the University of Saskatchewan, and we gratefully acknowledge the support of the Department of Mathematics and Statistics during trying times~\cite{Howe_VOX}. Finally, we would like to thank the referee for several valuable suggestions, in particular the arguments in the appendix.
\end{section}

\begin{section}{Preliminaries}
\subsection{Weak and cyclic amenability for commutative Banach algebras}\label{ss:weak-cyc-amen}
We assume familiarity with the basic definitions and properties of Banach algebras and Banach modules over them. We say that a bimodule $M$ over a given algebra $A$ is \dt{symmetric} if $a\cdot m = m\cdot a$ for all $a\in A$ and $m\in M$. If $A$ is an algebra and $M$ is an $A$-bimodule, then a linear map $D:A\to M$ is said to be a \dt{derivation} if it satisfies the Leibniz identity $D(ab)=a\cdot D(b) + D(a)\cdot b$ for all $a,b\in A$.

In the case where $A$ is a Banach algebra and $M=A^*$ is its dual, we say that a derivation $D:A\to A^*$ is \dt{cyclic} if it furthermore satisfies $D(a)(b)=-D(b)(a)$ for all $a,b\in A$. (Motivation for considering cyclic derivations lies beyond the scope of the present paper, but some idea is given by the results and remarks of \cite{Gron_cyc-amen}.)

The following definitions are due to Bade, Curtis and Dales \cite{BCD_WA} and Gr{\o}nb{\ae}k \cite{Gron_cyc-amen}. \emph{We will only give definitions valid for commutative Banach algebras}: for the appropriate generalizations to the non-commutative setting, the reader may consult the wider literature on weak and cyclic amenability.

\begin{dfn}
Let $A$ be a commutative Banach algebra. We say that $A$ is \dt{weakly amenable} if there is no non-zero, continuous derivation from $A$ to any symmetric Banach $A$-bimodule. We say that $A$ is \dt{cyclically amenable} if there is no non-zero, continuous, cyclic derivation from $A$ to $A^*$.
\end{dfn}

\begin{rem}
Clearly, weak amenability implies cyclic amenability. The converse is not true in general: any singly generated Banach algebra is cyclically amenable, as can be seen by looking at the values a continuous cyclic derivation must take on powers of the generator, while there are many examples of singly generated Banach algebras --- even finite dimensional ones --- that support bounded, non-zero point derivations, and hence are very far from being weakly amenable.
\end{rem}

The following well-known observations will be used in Sections~\ref{s:su2} and~\ref{s:lie-struct}, see also Proposition~\ref{p:WA-hered} below. If $A, B$ are commutative Banach algebras and $\theta:A\to B$ is a continuous homomorphism with dense range, then continuous, non-zero derivations on $B$ can be pulled back along $\theta$ to give continuous, non-zero derivations on $A$. Consequently, if $A$ is weakly amenable then so is~$B$. Since cyclic derivations pull back to cyclic derivations, if $A$ is cyclically amenable then so is~$B$.

\begin{rem}
We repeat that the definitions of weak and cyclic amenability are less straightforward for non-commutative Banach algebras. In general, quotients of a cyclically amenable noncommutative Banach algebra need not be cyclically amenable.
\end{rem}

\subsection{Coefficient functions and the Fourier algebra}
To make the present paper more accessible to those whose background is in Banach algebras rather than abstract harmonic analysis, we use this subsection to review some background results and terminology. Our approach is heavily influenced by work of G.~Arsac \cite{Arsac};
see also the master's thesis of C.~Zwarich~\cite{Zwarich_MSc} for a good summary and exposition.

We start in some generality. Let $G$ be a locally compact group. A \dt{continuous unitary representation} of  $G$ on ${\cH}$ is a group homomorphism $\pi$ of $G$
into the group of unitary operators ${\cal U}({\cH})$ which is WOT-continuous, i.e.
for every vector $\xi$ and $\eta$ in ${\cH}$, the function
\[ \xi*_\pi\eta:G\rightarrow \Cplx, \quad g\mapsto \pair{ \pi(g)\xi }{ \eta } \]
is continuous. (For unitary representations, WOT-continuity is equivalent to SOT-continuity; the latter is often used instead in the definition.)
We denote unitary equivalence of two representations $\pi$ and $\sigma$ by $\pi\simeq\sigma$. The collection of irreducible, continuous unitary representations modulo unitary equivalence is denoted by~$\widehat{G}$.

Functions of the form $\xi*_\pi\eta$, for vectors $\xi$ and $\eta$ in ${\cH}$, are called the \dt{coefficient functions}
of $G$ \dt{associated }with the representation~$\pi$.
Following \cite{eymard64}, we denote by $\FS(G)$ the set of all the coefficient functions of~$G$. This is in fact a subalgebra of $C_b(G)$, known as the \dt{Fourier--Stieltjes algebra} of $G$.
%
Moreover we may identify $\FS(G)$ with the dual Banach space of the full group $\Cst$-algebra $\Cst(G)$. (The idea, briefly, is that a coefficient function $\xi*_\pi\eta$ may be identified with the functional $a\mapsto \pair{\pi(a)\xi}{\eta}$.)
Equipped with this norm and the algebra structure inherited from $C_b(G)$, $\FS(G)$ becomes a Banach algebra.

Let $\lm$ denote the left regular representation of $G$ on $L^2(G)$.
Eymard \cite[Ch.~3]{eymard64} showed that the $\norm{\cdot}_{\FS(G)}$-closure of the algebra $(C_c\cap\FS)(G)$ coincides with the closed subspace generated by coefficient functions associated to~$\lm$. This closed subalgebra, denoted by $\FA(G)$, is the \dt{Fourier algebra} of $G$.
In fact, every element of $\FA(G)$ can be realized as a coefficient function associated to $\lm$, and we have
\begin{equation}\label{eq:A(G)-as-A_lambda}
\norm{u}_{\FA(G)}=\inf\{\norm{\xi}_2\norm{\eta}_2: u=\xi*_\lambda\eta\}.
\end{equation}
In many sources, the Fourier algebra of $G$ is \emph{defined} to be the set of coefficient functions of~$\lm$, and then shown to be an algebra using the ``absorption'' properties of~$\lm$. (See \cite[\S4.1]{Zwarich_MSc} for a quick exposition of this approach.)

\begin{rem}
If $G$ is a locally compact \emph{abelian} group, one can identify $\FA(G)$ and $\FS(G)$ with the $L^1$-algebra and the measure algebra, respectively, of~$\widehat{G}$.
\end{rem}

Now let $H$ be a closed subgroup of $G$, and let $\imath^*:C_0(G)\to C_0(H)$ be the restriction homomorphism. 
It turns out that $\imath^*$ maps $\FA(G)$ contractively onto $\FA(H)$: this is originally due to C.~Herz, but an approach using spaces of coefficient functions was given by G. Arsac~\cite{Arsac}. (A fairly self-contained account of Arsac's approach can be found in \cite[\S4]{Zwarich_MSc}.)
Recalling our earlier remarks about derivations on Banach algebras, we therefore have the following well-known result, whose proof we omit.

\begin{prop}\label{p:WA-hered}
Let $G$ be a locally compact group and $H$ a closed subgroup. If $\FA(H)$ is not weakly amenable, then $\FA(G)$ is not weakly amenable. If $\FA(H)$ is not cyclically amenable, then $\FA(G)$ is not cyclically amenable.
\end{prop}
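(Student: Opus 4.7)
The plan is to prove the contrapositive in each case: from a given non-zero (respectively cyclic) derivation on $\FA(H)$, construct a non-zero (respectively cyclic) derivation on $\FA(G)$, by pulling back along the restriction homomorphism $\imath^*:\FA(G)\to \FA(H)$. The essential input, recalled in the paragraph just before the statement, is that by the Herz--Arsac theorem $\imath^*$ is a continuous algebra homomorphism which is surjective, hence in particular has dense range.

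For the weak-amenability half, I would start with any symmetric Banach $\FA(H)$-bimodule $M$ and a non-zero continuous derivation $D:\FA(H)\to M$. Turn $M$ into a symmetric Banach $\FA(G)$-bimodule by setting $u\cdot m \defeq \imath^*(u)\cdot m$, and define $\tilde D \defeq D\circ \imath^* :\FA(G)\to M$. The Leibniz rule for $\tilde D$ is immediate from multiplicativity of $\imath^*$; continuity is immediate since both factors are continuous and $\imath^*$ is contractive; and $\tilde D\neq 0$ because $\imath^*$ is surjective while $D\neq 0$. So $\FA(G)$ admits a non-zero continuous derivation into a symmetric bimodule, contradicting weak amenability of $\FA(G)$.

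For the cyclic half one must be slightly more careful, since the target of a cyclic derivation has to be the dual of the algebra itself. Given a non-zero cyclic derivation $D:\FA(H)\to \FA(H)^*$, I would define $\tilde D:\FA(G)\to \FA(G)^*$ by
\[
\tilde D(u)(v) \defeq D(\imath^*u)(\imath^* v) \qquad (u,v\in \FA(G)).
\]
A one-line computation, using commutativity (so that for $A\in\{\FA(G),\FA(H)\}$ and $a\in A$, $\phi\in A^*$ one has $(a\cdot\phi)(b)=\phi(ab)$) together with the fact that $\imath^*$ is a homomorphism, shows that $\tilde D$ satisfies the Leibniz rule. Cyclicity follows at once from that of $D$: $\tilde D(u)(v) = D(\imath^*u)(\imath^*v) = -D(\imath^*v)(\imath^*u) = -\tilde D(v)(u)$. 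Continuity is clear, and non-triviality follows because surjectivity of $\imath^*$ allows any pair $(x,y)\in \FA(H)\times\FA(H)$ with $D(x)(y)\neq 0$ to be lifted to a pair $(u,v)$ with $\tilde D(u)(v)\neq 0$.

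I do not expect any real obstacle: once the Herz--Arsac surjectivity of $\imath^*$ is granted, the whole argument is formal. The only subtle point is recognising that the naive composition $D\circ\imath^*$ is the right object for weak amenability but not for cyclic amenability, where one instead needs the bilinear pullback $\tilde D(u)(v)=D(\imath^*u)(\imath^*v)$ so that the image lies in $\FA(G)^*$ rather than in $\FA(H)^*$.
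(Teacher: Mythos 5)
Your proposal is correct and is essentially the proof the paper has in mind: the paper omits the argument precisely because it reduces to the remarks in Section~2.1 about pulling back derivations along a continuous homomorphism with dense range, here the Herz restriction map $\imath^*:\FA(G)\to\FA(H)$, exactly as you do. Your explicit treatment of the cyclic case via the bilinear pullback $\tilde D(u)(v)=D(\imath^*u)(\imath^*v)$ is the right way to make the target land in $\FA(G)^*$, and all the verifications (Leibniz, continuity, non-vanishing via surjectivity, antisymmetry) go through as you state.
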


\para{Spaces of coefficient functions associated to a fixed representation}
Proofs of the results stated here may be found in Arsac's thesis \cite{Arsac}; see also \cite[\S3.5]{Zwarich_MSc}.

Let $\pi$ be a continuous unitary representation of $G$ on a Hilbert space ${\cH}_\pi$. We define $\FA_\pi(G)$ to be the closed subspace of $\FS(G)$
generated by the coefficient functions of $G$ associated with $\pi$, i.e.
\[ \FA_\pi=\overline{\lin}^{\norm{\cdot}_{\FS(G)}}\{\xi*_\pi\eta:\xi,\eta\in {\cH}_\pi\}. \]
\begin{rem}\label{r:A_pi_of_irred}
There is a natural quotient map of Banach spaces  $\cH_\pi \ptp \overline{\cH_\pi} \to \FA_\pi$, where $\ptp$ denotes the projective tensor product of Banach spaces.
 (See \cite[Th\'eor\`eme~2.2]{Arsac}.) Usually this map is not injective, but it will be if $\pi$ is irreducible (this follows by taking the adjoint of this map and using Schur's lemma).
\end{rem}

In particular, $\FA_\pi(G)$ consists of all coefficient functions $u$ that can be written in the form
$u=\sum_{i=1}^\infty \xi_n*_\pi\eta_n$\/,
where $\xi_n$ and $\eta_n$ belong to ${\cH}_\pi$ and $\sum_{i=1}^\infty\norm{\xi_i}\norm{\eta_i}<\infty$.
Moreover, for every $u$ in $\FA_\pi(G)$,
\[
\norm{u}_{\FS(G)}=\inf\{\sum_{i=1}^\infty\norm{\xi_i}\norm{\eta_i}: \text{ $u$ represented as above}\},
\]
and the infimum is attained. That is, given $u\in \FA_\pi$\/, we can always write it as an absolutely convergent sum $u = \sum_{n=1}^\infty \xi_n *_{\pi} \eta_n$ where $\sum_{n=1}^\infty \norm{\xi_n} \norm{\eta_n}= \norm{u}$.

An irreducible representation $\pi$ is called \dt{square-integrable} if there is some nonzero square-integrable coefficient function associated to~$\pi$. A representation $\pi$ is  square-integrable if and only if it is equivalent to a sub-representation of $\lambda$, in which case $\FA_\pi(G)\subseteq \FA_\lm(G)=\FA(G)$. Clearly all irreducible representations of a \emph{compact} group are square-integrable. Importantly for us, both the $ax+b$ group and the reduced Heisenberg group also have in some sense ``enough square-integrable representations'' that techniques used in the compact case can be adapted to handle these two non-compact groups.

\para{The case of compact groups}
Although the focus of our paper is on certain non-compact groups, our approach is informed by properties of Fourier algebras of compact groups, which we now briefly review.
When $G$ is a compact group, $\FA(G)=\FS(G)$, so $\FA_\pi(G)\subseteq \FA(G)$ for all continuous unitary representations~$\pi$. Moreover there is an $\ell^1$-direct sum decomposition of Banach spaces
\begin{equation}\label{eq:decomp_compact}
\FA(G) = \bigoplus_{\pi\in\widehat{G}} \FA_\pi(G) \iso \bigoplus_{\pi\in\widehat{G}} \cH_\pi\ptp\overline{\cH_\pi}\ .
\end{equation}
The \dt{Schur orthogonality relations} for coefficient functions of irreducible representations are as follows:
given $\pi,\sigma\in\widehat{G}$ and $\xi_1, \eta_1\in \cH_\pi$ and $\xi_2$, $\eta_2\in \cH_\sigma$, we have
\begin{equation}\label{eq:schur-orth-compact}
\pair{\xi_1*_\pi\eta_1}{\xi_2*_\sigma\eta_2}_{L^2(G)}
= \left\{ \begin{aligned}
 & 0 & \quad\text{if $\pi\not\simeq\sigma$}  \\
& \dim(\pi)^{-1} \pair{\xi_1}{\xi_2}\pair{\eta_2}{\eta_1} & \quad\text{if $\pi=\sigma$}
\end{aligned}
\right.
\end{equation}

\end{section}


\begin{section}{Revisiting Johnson's result}\label{s:su2}
In this section, to reduce congested notation we will abbreviate $\SO_3(\Real)$ and $\SU_2(\Cplx)$ to $\SOTHREE$ and $\SUTWO$ respectively.

Johnson proves in \cite[\S7]{BEJ_AG} that the Fourier algebra of $\SOTHREE$ is not weakly amenable. In fact, he shows (Theorem 7.4, {\it ibid.}\/) that this algebra supports a non-zero \emph{cyclic derivation}, in the sense of Section~\ref{ss:weak-cyc-amen}. It is the concrete construction given in the proof of this result, rather than the general machinery developed in the rest of his paper, which forms the basis for our approach.
In this section we review his construction, giving a slightly different presentation of the ideas, which will generalize in a better way to non-compact groups.

For technical reasons, we work not on $\SOTHREE$ but on its double cover $\SUTWO$. Note that the covering map $\SUTWO\to\SOTHREE$ induces an isometric inclusion of algebras $\imath:\FA(\SOTHREE)\to\FS(\SUTWO)=\FA(\SUTWO)$. Hence, to prove that neither $\FA(\SOTHREE)$ nor $\FA(\SUTWO)$ are weakly amenable, it suffices to construct a bounded derivation $D:\FA(\SUTWO)\to\FA(\SUTWO)^*$ and check that $D(\imath(f))(\imath(g))\neq 0$ for some $f,g\in\FA(\SOTHREE)$.

Given a well-behaved compact Riemannian manifold $M$, one naturally obtains derivations on $C^\infty(M)$ by taking partial derivatives along some vector field. For compact Lie groups this can be done in a down-to-earth way. We consider the case $G=\SUTWO$ and, guided by the calculations of \cite[\S7]{BEJ_AG}, make the following definitions.
For $\phi\in\Real$ let 
\[ s_\phi= \left( \begin{matrix}e^{i\phi/2} & 0 \\ 0 & e^{-i\phi/2} \end{matrix} \right) \]
and for $p\in\SUTWO$, $f\in C^1(\SUTWO)$, we define
\[ \partial_\phi f (p)= \left.\frac{\partial}{\partial \phi} f(ps_\phi)\right\vert_{\phi=0} \equiv \lim_{\phi\to 0} \frac{f(ps_\phi)-f(p)}{\phi}\ . \]
(The family $(s_\phi)_{\phi\in\Real}$ generates a maximal torus of $\SUTWO$: we do not need this fact directly, but in some sense it underlies the estimates we use later.)
Clearly $\partial_\phi$ defines a continuous derivation $C^1(\SUTWO)\to C(\SUTWO)$. Define $D_\flat: C^1(\SUTWO)\times C(\SUTWO)\to\Cplx$ by
\[  D_\flat(f,g) = \int_{\SUTWO} (\partial_\phi f) g\,d\mu \,.\]
Then $D_\flat$, viewed as a linear map $C^1(\SUTWO)\to C(\SUTWO)^*$, is a derivation. 
What is less obvious --- and in effect, what Johnson proved --- is that $D_\flat$ satisfies the inequality
\begin{equation}\label{eq:desired}
 \abs{D_\flat(f)(g) } \leq C \norm{f}_{\FA} \norm{g}_{\FA} \tag{$\dagger$}
\end{equation}
for all trigonometric polynomials $f$ and $g$, and some constant~$C$. (Here and in the rest of this section, we denote the norm on $\FA(\SUTWO)$ by $\norm{\cdot}_{\FA}$ for sake of legibility.)
From this it is routine to deduce that $D_\flat$ extends to a bounded linear map $D:\FA(\SUTWO)\to\FA(\SUTWO)^*$; since $D_\flat$ is a non-zero derivation, so is~$D$.

Johnson proves the inequality \eqref{eq:desired} using an auxiliary algebra $\FA_\gamma$ which is only well-defined when $G$ is a compact group. Since we have non-compact examples in mind we take a different approach, and consider directly the effect of $\partial_\phi$ on coefficient functions of each $\pi\in\widehat{\SU(2)}$. We find that
\[
\partial_\phi(\xi*_\pi\eta)(p)
  = \left.\frac{\partial}{\partial\phi} \pair{\pi(p)\pi(s_\phi)\xi}{\eta}\right\vert_{\phi=0} 
  = ((\sF_\pi \xi)*_{\pi}\eta )(p) 
\]
where $\sF_\pi$ is defined to be the operator  $\left.\frac{\partial}{\partial \phi} \pi(s_\phi) \right\vert_{\phi=0} \in \Bdd(\cH_\pi)$.
In particular, $\partial_\phi$ maps each coefficient space $\FA_\pi(\SUTWO)$ to itself. Taking $\ell^1$-summable linear combinations of coefficient functions, and using the Schur orthogonality relations \eqref{eq:schur-orth-compact}, we have the following:
given $\pi,\sigma\in\widehat{\SU(2)}$ and $u\in\FA_\pi(\SUTWO)$, $w\in\FA_\sigma(\SUTWO)$, then
\[ \begin{aligned}
  \int_{\SUTWO} (\partial_\phi u)\overline{w}\,d\mu & = 0 & \quad\text{if $\pi\not\simeq\sigma$} \\
\left\vert \int_{\SUTWO} (\partial_\phi u)\overline{w}\,d\mu\right\vert
 & \leq \dim(\pi)^{-1} \norm{\sF_\pi} \norm{u}_{\FA} \norm{w}_{\FA} & \quad\text{if $\pi=\sigma$} \end{aligned}  \]
Therefore, since $\FA(\SUTWO) =\bigoplus_\pi \FA_\pi(\SUTWO)$ and complex conjugation of representations is a bijection of $\widehat{\SUTWO}$, we arrive at the inequality
\[ \left\vert \int_{\SUTWO} (\partial_\phi f)g\,d\mu\right\vert
 \leq \left(\sup\nolimits_{\pi\in\widehat{\SUTWO}} \frac{\norm{\sF_\pi}}{\dim(\pi)} \right) \norm{f}_{\FA} \norm{g}_{\FA} \quad\text{for all $f,g\in\Trig(\SUTWO)$.} \]
Therefore, to show that \eqref{eq:desired} holds, we only need to prove that
\begin{equation}\label{eq:BEJ_bound}
\sup\nolimits_{\pi\in\widehat{\SUTWO}} \dim(\pi)^{-1} \norm{\sF_\pi}  <\infty.
\tag{$\ddagger$}
\end{equation}
This calculation was done in \cite[\S7]{BEJ_AG}, using the well-known representation theory of $\SUTWO$. Given $\pi\in\widehat{\SUTWO}$, put $n=\dim(\pi)-1$; then with respect to the standard choice of basis for~$\cH_\pi$, the matrix $\pi(s_\phi)$ is diagonal with entries $e^{in\phi/2}, e^{i(n-2)\phi/2}$, \dots, $e^{-in\phi/2}$. It follows that $\norm{\sF_\pi} \leq \dim(\pi)/2$ and we have the required uniform bound.

Finally, let $\pi$ be the standard representation of $\SOTHREE$ on $\Real^3$, and regard it as a representation of $\SUTWO$. Let $\xi$ be any vector in $\Real^3$ such that $\pair{\sF_\pi\xi}{\xi}\neq 0$. Then $f=\xi*_\pi\xi\in\FA(\SOTHREE)$ and we find that
\[ D(\imath(f))(\imath(\overline{f})) = \frac{1}{3} \pair{\sF_\pi\xi}{\xi}\pair{\xi}{\xi} \neq 0 \]
so that $\imath^* D$ is a non-zero, bounded derivation from $\FA(\SOTHREE^*)$ to its dual, as required.

\begin{rem}\label{r:copy-of-so3-or-su2}
We only needed knowledge of the irreducible representations of $\SUTWO$ in order to get a suitable estimate on the norms of the operators $\sF_\pi$. 
As Johnson remarks in \cite[\S7]{BEJ_AG}, the same method would work on any other compact connected Lie group~$G$, provided that one can choose a suitable homomorphism $s: \Real\to G$ for which $\sF_\pi \defeq (\partial/\partial \phi)\pi(s_\phi)\vert_{\phi=0}$ satisfies a bound analogous to~\eqref{eq:BEJ_bound}. Plymen \cite{Plymen_AG} observed that this proviso is always met for every non-abelian, compact, connected Lie group $G$. However, inspection of his argument shows that it proceeds by locating a closed subgroup of $G$ isomorphic to either $\SOTHREE$ or $\SUTWO$, and so we may appeal instead to Proposition~\ref{p:WA-hered}. This is, for instance, the approach taken in \cite{FSS_WAAG}.
\end{rem}

\begin{rem}[An approach via the Plancherel formula]\label{r:use-Plancherel-for-compact}
In \cite{BEJ_AG}, the Fourier algebra of a compact group is considered as the collection of functions on the group whose ``non-abelian Fourier series'' converges absolutely. (This is the older point of view on the Fourier algebra, predating Eymard's paper; historical details may be found in \cite[\S34]{HewRoss2}.) More precisely, given a compact group $G$ and $f\in \FA(G)$,
\[ \norm{f}_{\FA} = \sum_{\pi\in\widehat{G}} \dim(\pi) \norm{\pi(f)}_1 \]
where $\pi:L^1(G)\to\Bdd(\cH_\pi)$ is the algebra homomorphism obtained by integrating the unitary representation $\pi:G\to\cU(\cH_\pi)$, and $\norm{\cdot}_1$ denotes the trace-class norm. (Compare this with the identity \eqref{eq:decomp_compact}.)
One also has the \dt{Plancherel formula}
\[ \pair{f}{g}_{L^2(G)} = \sum_{\pi\in \widehat{G}} \dim(\pi) \Tr(\pi(f)\pi(g)^*) .\]
Now, in the case $G=\SUTWO$, it is easily verified that for any $f\in\Trig(\SUTWO)$ we have $\pi(\partial_\phi f) = \pi(f)(\sF_\pi)^*$. Combining this with the Plancherel formula and the inequality \eqref{eq:BEJ_bound}, one obtains an alternative proof of the desired inequality \eqref{eq:desired}. We will return briefly to this theme at the end of the paper.
\end{rem}

\end{section}

\begin{section}{The $ax+b$ group}\label{s:ax+b}
How can we extend or adapt the argument of Section~\ref{s:su2} to non-compact cases? There are two convenient features that we exploited when $G$ is compact: we can decompose $\FA(G)$ as a direct sum of coefficient spaces of \emph{irreducible} representations; and coefficient functions of irreducible representations satisfy explicit orthogonality relations. Groups whose Fourier algebras have the first property are called \dt{AR-groups} and it turns out that one can find connected, non-compact examples; the price one pays is that these examples are usually non-unimodular. For such groups there are generalized versions of the Schur orthogonality relations, although non-unimodularity means they are not as straightforward as in the compact case, as we shall see.

One of the simplest examples of a non-compact, connected AR-group is the so-called ``real $ax+b$ group''. To be precise,
we define it to be the group $G$ of orientation-preserving affine transformations of $\Real$, i.e. 
\[ G=\left\{\twomat{a}{b}{0}{1}: a\in\Real_+^*, b\in \Real\right\}. \]
%
Here, $\Real^*_+=(0,\infty)$ should be interpeted as
 ``the positive part of the multiplicative group of the field~$\Real$'', and it carries a natural Haar measure $t^{-1}dt$.

We may identify $G$ with  the semidirect product $\Real\rtimes \Real_+^*=\{(b,a)\colon b\in \Real, a\in \Real_+^*\}$, where $\Real_+^*$ acts on $\Real$ by multiplication. 
Recall that the left Haar measure of $G$ is (up to a constant multiple) given by $d\mu(a,b)=a^{-2}\ da\ db$, where $da$ and $db$ both denote the Haar measure of $\Real$. The modular function of $G$ is $\Delta(a,b)=\frac{1}{a}$.

\para{Outline of our construction}
For $f$ a ``suitable'' function on $G$, the new function $\madb f$ defined by
\[ (\madb f)(b,a) = -\frac{1}{2\pi i} a\frac{\partial f}{\partial b}(b,a) \qquad(b\in\Real, a\in\Real_+^*) \]
is well-defined and belongs to $C_0(G)\cap L^1(G)$.
If we now define a bilinear map $D_\flat$ (on some suitable dense subalgebra of $\FA(G)$) by
\[ D_\flat(f,g) \defeq \int_G  (\madb f)(b,a) g(b,a) \ d\mu(b,a)\; \]
then $D_\flat$ satisfies the Leibniz identity, in the sense that
\[ D_\flat(fg,h) = D_\flat(g,hf) + D_\flat(f, gh) \quad\text{for $f,g,h$ ``suitable'' functions on $G$.} \]
The operator $\madb$ is chosen in such a way that,
by using the orthogonality relations for $\FA_{\pi_{\pm}}$\/, we can obtain the upper bound
\[ \abs{ D_\flat(v)(w) } \leq \norm{v}_{\FA(G)} \norm{w}_{\FA(G)} \quad\text{when $v$ and $w$ are ``convenient'';} \]
 The orthogonality relations also show explicitly that $D_\flat$ is not identically zero. Provided that ``convenient'' functions are ``suitable'' and are dense in $\FA(G)$, we may then take the unique continuous extension of $D_\flat$ to a bounded bilinear map $D:\FA(G)\times \FA(G)\to\Cplx$. Finally we use continuity arguments to show that $D$ satisfies the same identity as $D_\flat$, but this time for all functions in $\FA(G)$ and not just the ``suitable'' ones.

\bigskip
To make this outline into a proof, we need to replace ``suitable'' and ``convenient'' by precise conditions. In doing so, the product of convenient functions might not be convenient, in which case the last part of our task --- showing that the continuous extension of $D_\flat$ is still a derivation --- is not as immediate as one might expect. We can get round this using the following lemma, which is stated in a general setting of Banach algebras and dense subspaces to show that the ideas involved are not limited to our particular example.

\begin{lem}[Continuous extensions of derivations]
\label{l:fussy-extension}
Let $A$ be a Banach algebra and let $V$ be a dense linear subspace. Let $B$ be a subspace of $A$ that contains $V$ and $V\cdot V= \{fg \colon f,g\in V\}$.

Suppose that $D_\flat: B\times B\to\Cplx$ is a bilinear map with the following properties:
\begin{newnum}
\item\label{li:bounded1} for each $w\in V$, the linear maps $D_\flat(\blank, w) :B\to\Cplx$ and $D_\flat(w,\blank): B\to\Cplx$ are $\norm{\cdot}_A$-continuous;
\item\label{li:leibniz}
$D_\flat(fg,h) = D_\flat(g,hf) + D_\flat(f,gh)$ for all $f,g,h\in V$;
\item\label{li:bounded2} there is a constant $C$ such that $\abs{D_\flat(v,w)} \leq C \norm{v}_A \norm{w}_A$ for all $v,w\in V$.
\end{newnum}
Then there is a unique continuous linear map $D:A\to A^*$ which, when viewed as a bilinear form on $A$, agrees with $D_\flat$ on $V\times V$. Moreover,
\[ D(b)(v) =D_\flat(b,v) \text{ and } D(v)(b)=D_\flat(v,b) \quad\text{for all $b\in B$ and $v\in V$,} \]
and $D$ is a derivation from $A$ to $A^*$.
\end{lem}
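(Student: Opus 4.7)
\textbf{Plan for proof of Lemma \ref{l:fussy-extension}.} The plan is to extend $D_\flat$ in three stages: first from $V\times V$ to a continuous bilinear form on $A\times A$ (hence a bounded linear map $A\to A^*$), then verify the agreement of this extension with $D_\flat$ on the larger domain $B$, and finally upgrade the Leibniz identity from triples in $V^3$ to triples in $A^3$. For the first stage, hypothesis~\ref{li:bounded2} together with density of $V$ in $A$ allows the restriction $D_\flat|_{V\times V}$ to extend uniquely, by continuity, to a bounded bilinear map $A\times A\to\Cplx$; this in turn corresponds to a unique bounded linear operator $D:A\to A^*$ with $\norm{D}\le C$, and uniqueness is automatic from density.

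Next I would check that $D(b)(v)=D_\flat(b,v)$ for all $b\in B$ and $v\in V$ (together with the symmetric identity with slots reversed). Fix $v\in V$. The functional $b\mapsto D(b)(v)$ is $\norm{\cdot}_A$-continuous on all of $A$, because $D$ is bounded; the functional $b\mapsto D_\flat(b,v)$ is $\norm{\cdot}_A$-continuous on $B$ by hypothesis~\ref{li:bounded1}. These two functionals agree on $V$ by construction of $D$, and $V$ is $\norm{\cdot}_A$-dense in $B\subseteq A$, so they must agree on all of $B$. The symmetric argument handles the other slot.

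Finally, to verify that $D$ is a derivation---that is, $D(ab)(c)=D(b)(ca)+D(a)(bc)$ for all $a,b,c\in A$---I would run a three-step density argument. For $f,g,h\in V$ the product $fg$ lies in $V\cdot V\subseteq B$, so by the previous paragraph
\[ D(fg)(h)=D_\flat(fg,h),\quad D(g)(hf)=D_\flat(g,hf),\quad D(f)(gh)=D_\flat(f,gh), \]
and hypothesis~\ref{li:leibniz} then yields the derivation identity on $V\times V\times V$. To extend it, first fix $f,g\in V$: both sides are $\norm{\cdot}_A$-continuous in $h$, since $h\mapsto hf$ and $h\mapsto gh$ are continuous in $A$ and $D:A\to A^*$ is bounded. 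Hence the identity holds for $f,g\in V$ and $h\in A$. One then repeats this approximation in turn in the $g$-slot and in the $f$-slot, each time exploiting joint continuity of multiplication in $A$ together with boundedness of $D$, thereby obtaining the derivation identity on all of $A^3$.

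The delicate point---and the reason this lemma is worth isolating---is the bookkeeping in the second step: $B$ is not assumed to be norm-closed in $A$, but every element of $B$ is approximable in the $A$-norm by elements of $V\subseteq B$, which is exactly why~\ref{li:bounded1} is phrased in terms of $\norm{\cdot}_A$-continuity rather than in terms of some intrinsic topology on $B$. Once that identification is secured, all remaining steps are routine continuity-and-density arguments.
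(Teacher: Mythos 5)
Your proposal is correct and follows essentially the same route as the paper's proof: extend $D_\flat|_{V\times V}$ by continuity via hypothesis (iii), use hypothesis (i) plus $\norm{\cdot}_A$-density of $V$ to identify $D$ with $D_\flat$ on $(B\times V)\cup(V\times B)$, and then deduce the Leibniz identity on $V^3$ from hypothesis (ii) and the inclusion $V\cdot V\subseteq B$. The only cosmetic difference is that you pass from $V^3$ to $A^3$ one variable at a time, whereas the paper invokes joint continuity of $(a,b,c)\mapsto D(ab)(c)-D(b)(ca)-D(a)(bc)$ in a single step.
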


\begin{proof}
The existence and uniqueness of a continuous linear map $D:A\to A^*$ satisfying
\begin{equation}\label{eq:agree}
 D(v)(w) = D_\flat(v,w) \quad\text{for all $v,w\in V$} \tag{$*$}
\end{equation}
follows from \ref{li:bounded2} by standard functional analysis. Then, by \ref{li:bounded1}, if $w\in V$ then
 $D(\blank, w)$ and $D_\flat(\blank, w)$ are $\norm{\cdot}_A$-continuous maps agreeing on a $\norm{\cdot}_A$-dense subset of $B$, and hence they agree on all of~$B$. Similarly, $D(v, y)=D_\flat(v, y)$ for all $v\in V$ and $y\in B$.

Finally, to show $D:A\to A^*$ is a derivation, we must prove that
\[ D(ab)(c) = D(b)(ca) + D(a)(bc) \]
for all $a,b,c\in A$. Since $V$ is dense in $A$, it suffices by continuity to prove this for all $a,b,c\in V$. But since $V+V\cdot V\subseteq B$ and $D$ agrees with $D_\flat$ on the subset $(B\times V) \cup (V\times B)$, the desired identity now follows from \ref{li:leibniz}.
\end{proof}

To apply Lemma~\ref{l:fussy-extension} in the case of $\FA(G)$, we need to choose an appropriate dense subspace on which the desired bounds can be verified. We shall do this by considering certain coefficient functions of irreducible representations.

\begin{subsection}{Coefficient functions for the $ax+b$ group}
The irreducible unitary representations of $G$ can be found by identifying it with $\Real\rtimes\Real_+^*$ and using the Mackey machine for induced representations. It follows from this method that (up to unitary equivalence) there are exactly two infinite-dimensional irreducible representations in $\widehat{G}$, which we denote by $\pi_+$ and $\pi_{-}$\/.
These have various different realizations, and readers should beware that different standard sources for non-abelian harmonic analysis often differ in their choices. We follow the description used in \cite{eymard-terp,khalil_ax+b}.

We realize both $\pi_+$ and $\pi_{-}$ as representations on the  Hilbert space $\cH= L^2(\Real^*_+,t^{-1}dt)$, as follows:
\begin{equation}
\pi_{\pm}(b,a)\xi(t) \defeq e^{\mp 2\pi i bt}\xi(at).
\end{equation}
Then the coefficient functions of $\pi_+$ and $\pi_-$ have the following explicit form:
\begin{equation}\label{eq:explicit-coefficients}
\begin{aligned}
(\xi *_{\pi_+} \eta )(b,a)
& = \int_0^\infty e^{-2\pi i bt} \xi(at)\overline{\eta(t)}\, t^{-1}dt, \\
(\xi *_{\pi_-} \eta )(b,a)
& = \int_0^\infty e^{2\pi i bt} \xi(at)\overline{\eta(t)}\, t^{-1}dt & \quad(\xi,\eta\in\cH).
\end{aligned} 
\end{equation}

Suppressing mention of $G$, we denote by $\FA_{\pi_+}$ and $\FA_{\pi_{-}}$ the closed subspaces of $\FA(G)$ generated by the coefficient functions of $\pi_+$ and $\pi_{-}$ respectively.
Note that for $\xi$ and $\eta$ in $\cH$, we have $\overline{\xi*_{\pi_+}\eta}=\overline{\xi}*_{\pi_-}\overline{\eta}$.
Thus $\overline{\FA_{\pi_+}}=\FA_{\pi_-}$.

\begin{prop}[Decomposition of $\FA(G)$]\label{p:A(ax+b)-decomp}
There is an $\ell^1$-direct sum decomposition $\FA(G)=\FA_{\pi_+}\oplus_1 \FA_{\pi_-}$.
\end{prop}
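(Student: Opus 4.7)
The plan is to deduce the decomposition from the Plancherel decomposition of the left regular representation $\lm$ together with Arsac's theory of coefficient spaces. First I would note that $G$ is solvable (hence amenable) and of Type~I, so the known Plancherel theorem for the $ax+b$ group (as worked out in Khalil, or in Eymard--Terp) shows that $\lm$ is quasi-equivalent to $\pi_+ \oplus \pi_-$; the remaining irreducibles in $\what{G}$ are one-dimensional characters, which carry zero Plancherel measure and so do not contribute to $\lm$.

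Given this, I would invoke Arsac's theorem that $\FA_\pi$ depends only on the quasi-equivalence class of $\pi$, giving the chain of equalities
\[ \FA(G) \,=\, \FA_\lm \,=\, \FA_{\pi_+ \oplus \pi_-} \,=\, \FA_{\pi_+} + \FA_{\pi_-} \]
inside $\FS(G)$, the last being another standard result from Arsac's thesis. To promote this to an $\ell^1$-direct sum I would use that $\pi_+$ and $\pi_-$ are \emph{disjoint}: being inequivalent irreducibles of a Type~I group, they share no quasi-equivalent subrepresentations. Arsac's $\ell^1$-direct sum theorem for disjoint representations then yields $\FA_{\pi_+ \oplus \pi_-} \iso \FA_{\pi_+} \oplus_1 \FA_{\pi_-}$ isometrically, as required.

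The main obstacle to making the argument entirely self-contained is the appeal to Arsac's $\ell^1$-direct sum theorem, whose proof proceeds by identifying mutually orthogonal central projections in $\VN(G)$ that separate the two summands. A concrete backup route uses the explicit formulas~\eqref{eq:explicit-coefficients}: for each fixed $a$, the Fourier transform in $b$ of $(\xi *_{\pi_+} \eta)(\,\cdot\,,a)$ is supported in $(0,\infty)$, while that of $(\xi *_{\pi_-} \eta)(\,\cdot\,,a)$ is supported in $(-\infty,0)$. Together with a density argument (first restricting to coefficient functions with $\xi,\eta$ compactly supported in $\Real_+^*$, then passing to the norm closure) this gives well-defined norm-one projections $\FA(G) \to \FA_{\pi_\pm}$ whose ranges are complementary, which is equivalent to the $\ell^1$-direct sum decomposition claimed in the proposition.
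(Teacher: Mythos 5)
Your main route is correct, but it is genuinely different from the paper's proof. You obtain surjectivity of $\FA_{\pi_+}+\FA_{\pi_-}$ by citing the Plancherel theorem for $G$ (equivalently, the quasi-equivalence of $\lm$ with $\pi_+\oplus\pi_-$) and then applying Arsac's results that $\FA_\pi$ depends only on the quasi-equivalence class of $\pi$ and that disjoint representations yield an $\ell^1$-sum of coefficient spaces. This is essentially the proof of Khalil that the authors cite immediately after the proposition and deliberately decline to use, on the grounds that the Plancherel theorem for this non-unimodular group is itself technical. The appendix instead proves the needed fragment by hand: Step~1 shows, by an explicit change of variables combined with the partial Fourier transform in the $b$-variable, that $\cV_0=\lin\{\xi*_{\pi_\pm}\eta:\xi,\eta\in C_c(\Real_+^*)\}$ is dense in $L^2(G)$; Step~2 uses the orthogonality relations to show that $\lm$-convolutions of such coefficient functions land back in $\FA_{\pi_+}\oplus\FA_{\pi_-}$; density in $\FA(G)$ then follows because $\Psi_\lm:L^2(G)\ptp\overline{L^2(G)}\to\FA(G)$ is a quotient map. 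Both arguments rely on the same Arsac machinery for the isometric $\ell^1$-statement (inequivalent irreducibles are disjoint, so their coefficient spaces form an $\ell^1$-sum). Your version buys brevity at the price of importing the Plancherel theorem as a black box; the paper's version buys self-containedness at the price of the two explicit computations.

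One caveat about your ``concrete backup route'': the observation that $\pi_+$-coefficients are positive-frequency and $\pi_-$-coefficients negative-frequency in the $b$-variable does show that $\FA_{\pi_+}\cap\FA_{\pi_-}=\{0\}$ and identifies the candidate projection, but it does not by itself yield a \emph{norm-one} (or even bounded) projection for the $\FA(G)$-norm. Disjointness of spectral supports gives orthogonality in $L^2(G)$, i.e.\ contractivity of the projection in the $L^2$ sense; to transfer this to $\FA(G)$ one must recognize the projection as the preadjoint of multiplication by a \emph{central} projection in $\VN(G)$ separating the $\pi_+$- and $\pi_-$-isotypic parts of $L^2(G)$, whence $\VN(G)$ splits as an $\ell^\infty$-direct sum and its predual as an $\ell^1$-direct sum. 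That is precisely the content of the Arsac theorem you were trying to route around, so the backup as stated is circular rather than an independent elementary argument; only your main route actually closes the proof.
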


A proof of this, using the Plancherel formula for $G$, is given in \cite[Th\'eor\`eme 4]{khalil_ax+b}. Since the Plancherel theorem itself is not entirely straightforward for this group, and is somewhat technical to prove, we provide an alternative argument in the appendix.

\medskip
We continue with preliminaries.
For our later calculations, it is useful to express coefficient functions of $\pi_+$ and $\pi_{-}$ in terms of the classical Fourier transform on $\Real$. To avoid ambiguity we pause to fix some notational conventions.

\begin{notn}[Normalization for the Fourier transform on $\Real$]
Elements of $\widehat{\Real}$ will be denoted by $\chi_b$, where $b\in\Real$ and $\chi_b(x) = e^{2\pi i bx}$, and we normalize Haar measure on $\Real$ so that the correspondence $b\leftrightarrow \chi_b$ is measure-preserving.
We then define the Fourier transform $\cF:L^2(\Real)\to L^2(\widehat{\Real})$ to be the unique \emph{unitary} map that satisfies
\[ \cF(f)(\chi_b) = \int_{\Real} f(x)\overline{\chi_b(x)} \,dx \qquad\text{for all $f\in (L^1\cap L^2)(\Real), b\in\Real$.} \]
(Our choice of normalization in the definition of $\cF$ follows the choices in \cite[Chapter~4]{Foll_AHAbook}. See the remarks following \cite[Coroll.~4.2.3]{Foll_AHAbook} for further explanations.)
\end{notn}

Given $\xi,\eta\in\cH$ and $a\in \Real^*_+$, define ${}_a\xi\in\cH$ by ${}_a\xi(y)=\xi(ay)$. The map $\xi \mapsto {}_a\xi$ is an isometry on $\cH$, so by Cauchy--Schwarz, $_a\xi \overline{\eta} \in L^1(\Real_+^*, t^{-1}dt)$. 
Let $K^{-1}$ be the linear operator defined formally by
\[ (K^{-1}f)(t) =t^{-1}f(t) \qquad(t\in\Real_+^*) \]
 then $K^{-1}(_a\xi\overline{\eta}) \in L^1(\Real^*_+, dt)$.
Now let $\imath: L^1(\Real^*_+)\to L^1(\Real)$ be the inclusion map, and regard $\cF$ as a map $L^1(\Real)\to C_0(\widehat{\Real})$. Then by Equation \ref{eq:explicit-coefficients}, we have
\begin{equation}\label{eq:coeff-as-fourier}
(\xi*_{\pi_+}\eta)(b,a)=\cF(\imath K^{-1}({}_a\xi\overline{\eta}))(\chi_b)
\quad \mbox{and} \quad
(\xi*_{\pi_-}\eta)(b,a)=\cF(\imath K^{-1}(_a\xi\overline{\eta}))(\chi_{-b}).
\end{equation}

\begin{dfn}
We say that a coefficient function in $\FA_{\pi_+}$ or $\FA_{\pi_-}$ is \dt{convenient} if it is
of the form
 $f=\xi*_{\pi_{\pm}}\eta$ for some $\xi,\eta\in C_c^2(\Real_+^*)$. Note that this definition only applies to coefficient functions and not to their linear combinations.
Define $\cC_+$ to be the subspace of $\FA_{\pi_+}$ spanned by convenient coefficient functions associated to $\pi_+$, define $\cC_{-}$ analogously, and put $\cC\defeq \cC_+ + \cC_{-}\subset\FA(G)$.
\end{dfn}

Since $C_c^2(\Real_+^*)$ is dense in $\cH$, every coefficient function in $\FA_{\pi_{\pm}}$ can be approximated in norm by a convenient coefficient function in $\cC_{\pm}$. Hence $\cC_{\pm}$ is a dense subspace of $\FA_{\pi_{\pm}}$, and since $\FA(G)=\FA_{\pi_+} \oplus_1 \FA_{\pi_-}$, it follows that $\cC$ is a dense subspace of $\FA(G)$.

The formulas \eqref{eq:explicit-coefficients} and the dominated convergence theorem show that each $f\in\cC$ is differentiable in the $b$-direction. Moreover, if $\xi,\eta\in C_c^2(\Real_+^*)$, a direct calculation yields the identity
\[  \begin{aligned}
- \frac{1}{2\pi i} a \frac{\partial}{\partial b}(\xi*_{\pi_+}\eta)(b,a)
& = \int_0^\infty \left(- \frac{1}{2\pi i} \frac{\partial}{\partial b} e^{- 2\pi ibt}\right) a\xi(at)\overline{\eta(t)}\,t^{-1} dt \\
& = \int_0^\infty e^{- 2\pi ibt}  at\ \xi(at)\overline{\eta(t)}\,t^{-1} dt \\
& =  (K\xi *_{\pi_+} \eta) (b,a)\;,
\end{aligned} \]
and similarly (or by conjugation) we have
\[
- \frac{1}{2\pi i} a \frac{\partial}{\partial b}(\xi*_{\pi_-}\eta)(b,a)
 = - (K\xi *_{\pi_{-}} \eta) (b,a)\;.
\]

 To deal with sums of coefficient functions, it is useful to introduce the bounded \emph{linear} maps
\begin{subequations}
\begin{equation}\label{eq:Psi}
\Psi: \cH\ptp\overline{\cH}\to \FA_{\pi_+}
\quad,\quad
 \Psi(\xi\otimes \overline{\eta})  =\xi*_{\pi_+}\eta
\end{equation}
and
\begin{equation}\label{eq:Psi-bar}
\Psibar:  \overline{\cH}\ptp\cH \to \overline{\FA_{\pi_+}}=\FA_{\pi_-}
\quad,\quad
 \Psibar(\overline{\xi}\otimes \eta)  =\overline{\xi}*_{\pi_-}\overline{\eta} = \overline{\Psi(\xi\otimes\overline{\eta})}
\end{equation}
\end{subequations}
Note that $\Psi$ is a surjective quotient map, by definition. Since $\pi_+$ is irreducible, by Remark~\ref{r:A_pi_of_irred} $\Psi$ is injective, and hence a surjective isometry. The same is true for $\Psibar$. We may now rewrite the earlier identities more concisely as
\begin{equation}\label{eq:deriv-coeff-formula}
\left.\begin{aligned}
\madb \Psi(\xi\tp \overline{\eta}) & = \Psi(K\xi\tp \overline{\eta} ) \\
\madb \Psibar (\overline{\xi}\tp \eta) & = -\Psibar(K\overline{\xi}\tp \eta )
\end{aligned}\right\}
\qquad\text{for all $\xi,\eta\in C_c^2 (\Real_+^*)$.}
\end{equation}
(That is, $\Psi$ and $\Psibar$ intertwine the densely-defined operator $\madb$ with the densely-defined operators $K\tp I_{\cH}$ and $-K\tp I_{\cH}$ respectively.)

\begin{lem}[Convenient functions behave well]\label{l:containment1}
$\madb(\cC)\subseteq \cC\subseteq (\FA \cap L^1)(G)$.
\end{lem}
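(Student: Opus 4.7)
The claim splits into two containments, which I would handle separately using the machinery already set up.

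For the first containment $\madb(\cC) \subseteq \cC$, the plan is to apply the intertwining identities \eqref{eq:deriv-coeff-formula}. It suffices to check that if $\xi \in C_c^2(\Real_+^*)$, then $K\xi \in C_c^2(\Real_+^*)$ as well, where $(Kf)(t) = t f(t)$. But multiplication by $t$ is a smooth operation that preserves the support (which is bounded away from $0$ and $\infty$), so $K$ stabilises $C_c^2(\Real_+^*)$. Then \eqref{eq:deriv-coeff-formula} immediately gives $\madb(\xi *_{\pi_+} \eta) = K\xi *_{\pi_+} \eta \in \cC_+$ and similarly $\madb(\overline{\xi} *_{\pi_-} \overline{\eta}) = -K\overline{\xi} *_{\pi_-} \overline{\eta} \in \cC_-$; extending by linearity yields $\madb(\cC) \subseteq \cC$.

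For the second containment, the inclusion $\cC \subseteq \FA(G)$ is immediate from the definitions. The work is to show $\cC \subseteq L^1(G)$, and by linearity it suffices to take a single convenient coefficient function $f = \xi *_{\pi_+} \eta$ with $\xi, \eta \in C_c^2(\Real_+^*)$ (the $\pi_-$ case is identical, or follows by conjugation). Choose $0 < \alpha < \beta$ with $\supp(\xi), \supp(\eta) \subseteq [\alpha, \beta]$. Using \eqref{eq:explicit-coefficients}, the integrand in
\[ f(b,a) = \int_0^\infty e^{-2\pi i bt} \xi(at)\overline{\eta(t)}\, t^{-1}\, dt \]
vanishes unless both $t \in [\alpha,\beta]$ and $at \in [\alpha,\beta]$. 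This forces the $a$-support of $f$ to lie inside the compact set $[\alpha/\beta, \beta/\alpha] \subset \Real^*_+$.

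For each such $a$, the function $h_a(t) \defeq t^{-1}\xi(at)\overline{\eta(t)}$ lies in $C_c^2(\Real)$ (extending by zero outside $\Real_+^*$), with supports contained in the fixed compact set $[\alpha, \beta]$, and $\norm{h_a''}_\infty$ bounded uniformly for $a$ in the compact $a$-range. Since $f(b,a)$ is (up to the identification $\chi_b \leftrightarrow b$) the Fourier transform of $h_a$ in the variable $t$, two integrations by parts give the uniform decay estimate $|f(b,a)| \leq C(1+b^2)^{-1}$ for all $b \in \Real$ and all admissible $a$. Therefore
\[ \int_G |f(b,a)|\, a^{-2}\, da\, db \;\leq\; \int_{\alpha/\beta}^{\beta/\alpha} a^{-2}\, da \cdot \int_\Real \frac{C\, db}{1+b^2} \;<\; \infty, \]
proving $f \in L^1(G)$.

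The only mild subtlety is the uniform-in-$a$ control needed to conclude integrability against the Haar measure $a^{-2}\,da\,db$; but this is easily obtained because all derivatives of $h_a(t)$ in $t$ depend polynomially on $a$ over a compact $a$-range, so no real obstacle arises.
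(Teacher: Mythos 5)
Your proof is correct, and the first containment is handled exactly as in the paper (via \eqref{eq:deriv-coeff-formula} and the observation that $K$ stabilises $C_c^2(\Real_+^*)$). For the second containment your argument shares the paper's skeleton --- both reduce to showing that the $a$-support of $f$ is compact and that $\sup_a \norm{f(\cdot,a)}_{L^1(\Real)} < \infty$ --- but the key estimate is obtained differently. You integrate by parts twice to get the classical decay bound $\abs{f(b,a)} \leq C(1+b^2)^{-1}$, with $C$ controlled uniformly because the functions $h_a$ have supports in a fixed compact set and second derivatives depending polynomially on $a$ over a compact range. The paper instead factors $\imath K^{-1}({}_a\xi\overline{\eta})$ as the product of the dilated function ${}_a\imath(\xi)$ with the fixed function $\imath(K^{-1}\overline{\eta})$, converts the Fourier transform of the product into a convolution, applies Young's inequality, and then invokes the dilation-invariance of $\norm{\cF(g)}_{L^1(\widehat{\Real})}$ to remove the $a$-dependence entirely. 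Your route is more elementary and self-contained (no need for the product--convolution identity or the invariance property), at the cost of having to track the uniformity of the constant by hand; the paper's route is slicker on that point and also illustrates the general principle that automorphisms of $\Real$ induce isometric automorphisms of $\FA(\Real)$, which is a useful fact elsewhere. Either argument is acceptable.
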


\begin{proof}
By \eqref{eq:deriv-coeff-formula}, since $K(C_c^2(\Real_+^*)) = C_c^2(\Real_+^*)$,
 $\madb$ takes convenient coefficient functions to convenient coefficient functions, and hence $\madb(\cC)\subseteq \cC$ by linearity. This gives the first inclusion.
For the second inclusion, it suffices to prove that convenient coefficient functions are integrable; and since $\cC_{-}=\overline{\cC_+}$, it is enough to consider the convenient coefficient functions associated to $\pi_+$.

 So, let $\xi,\eta\in C_c^2(\Real_+)$ and put $f=\xi *_{\pi_+} \eta$.
Recall from \eqref{eq:coeff-as-fourier} that
\[ f(b,a) = \cF (\imath K^{-1}(_a\xi\overline{\eta}))(\chi_b) \qquad(a\in\Real_+^*,b\in\Real). \]

Since $\supp(\xi)$ and $\supp(\eta)$ are compact subsets of $(0,\infty)$ and $(_a\xi\overline{\eta}(t))=\xi(at)\overline{\eta(t)}$, we see that $_a\xi\overline{\eta}=0$ for all $a$ outside some compact subset $S\subset(0,\infty)$. Hence $\supp(f)\subseteq \Real\times S$.
It~now suffices to show that $\sup_{a\in S} \norm{f(\cdot, a)}_{L^1(\Real)} < \infty$.

Observe that
\[
\imath K^{-1}(_a\xi\overline{\eta}) = {}_a\imath(\xi) \cdot \imath(K^{-1}\overline{\eta})
\]
and that both $\imath(\xi)$ and $\imath(K^{-1}\overline{\eta})$ belong to $C_c^2(\Real)$. We now use the following properties of the Fourier transform on $\Real$:
\begin{newnum}
\item if $h\in C_c^2(\Real)$ then $h$ and $\cF(h)$ are integrable;
\item if $g_1$, $g_2$, $\cF(g_1)$ and $\cF(g_2)$ are all integrable, then $g_1g_2$ is integrable and $\cF(g_1g_2) = \cF(g_1)*\cF(g_2)$;
\item if $g$ and $\cF(g)$ are integrable, and $_ag(t)\defeq g(at)$, then $\norm{\cF(_ag)}_{L^1(\widehat{\Real})}=\norm{\cF(g)}_{L^1(\widehat{\Real})}$.
\end{newnum}
(The last property can be verified by direct calculation, but it is also a special case of the general fact that continuous automorphisms of a locally compact group, in this case $\Real$, induce \emph{isometric} automorphisms of its Fourier algebra.)
Together, these properties imply that
\[ \begin{aligned}
\norm{f(\cdot, a)}_{L^1({\Real})}
& = \norm{\cF(_a\imath(\xi)) *
 \cF(\imath(K^{-1}\overline{\eta}))}_{L^1(\widehat{\Real})} \\
& \leq \norm{\cF(_a\imath(\xi)) }_{L^1(\widehat{\Real})}\
  \norm{\cF(\imath(K^{-1}\overline{\eta}))}_{L^1(\widehat{\Real})} 
& = \norm{\cF(\imath(\xi)) }_{L^1(\widehat{\Real})}\
  \norm{\cF(\imath(K^{-1}\overline{\eta}))}_{L^1(\widehat{\Real})} \;.
\end{aligned} \]
This gives us the required uniform bound on $L^1$-norms, and hence concludes the proof.
\end{proof}

\begin{cor}[A convenient algebra]\label{c:containment2}
Let $\cB$ be the not-necessarily closed subalgebra of $\FA(G)$ generated by the set of convenient coefficient functions. Then $\madb(\cB) \subseteq (\FA\cap L^1)(G)$.
\end{cor}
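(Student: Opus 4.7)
The plan is to reduce the statement about products to the case of single convenient coefficient functions, which is already handled by Lemma~\ref{l:containment1}, by applying the classical Leibniz rule for the differential operator $\madb$. Observe first that $\madb$ is, up to the factor $-a/(2\pi i)$ which depends only on the variable $a$, nothing other than the partial derivative $\partial/\partial b$; so on products of functions that are differentiable in the $b$-direction it satisfies the ordinary product rule $\madb(fg) = (\madb f)g + f(\madb g)$ at the pointwise level.

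Next I would note that a generic element of $\cB$ is a finite linear combination of products $f_1 f_2 \cdots f_n$ with each $f_i$ a convenient coefficient function, and that each such $f_i$ is smooth in the $b$-variable (visible from the integral representation \eqref{eq:coeff-as-fourier} together with dominated convergence, since $\xi,\eta \in C_c^2(\Real_+^*)$). Iterating the product rule then gives
\[
\madb(f_1 f_2 \cdots f_n) = \sum_{i=1}^n f_1 \cdots f_{i-1}\,(\madb f_i)\,f_{i+1}\cdots f_n,
\]
and it suffices to show that each summand belongs to $(\FA \cap L^1)(G)$.

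For each fixed $i$, Lemma~\ref{l:containment1} provides $\madb f_i \in \cC \subseteq (\FA \cap L^1)(G)$, while for each $j \neq i$ we have $f_j \in \cC \subseteq \FA(G) \cap L^1(G) \subseteq \FA(G) \cap L^\infty(G)$, using the standard inclusion $\FA(G) \subseteq C_0(G)$. Two separate facts then finish the argument: $\FA(G)$ is a Banach algebra under pointwise multiplication, so the product $f_1 \cdots f_{i-1}\,(\madb f_i)\,f_{i+1} \cdots f_n$ lies in $\FA(G)$; and the same product consists of $n-1$ bounded factors and one $L^1$-factor, so it lies in $L^1(G)$ as well. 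Summing over $i$ and over the finitely many products that make up an element of $\cB$ gives the required containment.

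The only real obstacle is bookkeeping, namely verifying that Leibniz applies at each stage and that one has $\FA \cap L^\infty$ membership for all but one factor in each summand. Both are immediate, so the corollary follows at once from Lemma~\ref{l:containment1} combined with the elementary observation that $\FA(G) \subseteq C_0(G)$.
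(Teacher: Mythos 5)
Your proof is correct and follows essentially the same route as the paper, which likewise reduces to products of convenient coefficient functions and invokes the product rule together with the inclusions of Lemma~\ref{l:containment1}; you have simply written out the Leibniz expansion and the $L^1$/$L^\infty$ bookkeeping that the paper leaves implicit.
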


\begin{proof}
By linearity it is enough to prove that $\madb(f)\in (\FA\cap L^1)(G)$ whenever $f$ is a product of finitely many convenient coefficient functions. This follows by induction, using the product rule and the inclusions from Lemma~\ref{l:containment1}.
\end{proof}

\end{subsection}

\begin{subsection}{Orthogonality relations and estimates for our derivation}
\label{ss:ax+b_orth}
Consider the following densely defined, symmetric operator on $\cH$:
\[ (K\xi)(t)=t\xi(t) \qquad (t\in\Real_+^*) .\]
Note that $K^{-1}$ is also densely defined and symmetric, and is given by $(K^{-1}\xi)(t)=t^{-1} \xi(t)$. We have already made use of $K^{-1}$ in other calculations.

\begin{rem}
$K$ is the so-called \dt{Duflo--Moore operator} for the representations $\pi_+$ and $\pi_{-}$. It is a special case of a more general construction due to Duflo and Moore in \cite{duflo-moore} for certain non-unimodular groups.
However, to keep our arguments self-contained, we will not rely on the results of~\cite{duflo-moore}. The $ax+b$ group is sufficiently simple that it would take more effort to precisely translate those results into our setting, than to just carry out the necessary calculations directly.
\end{rem}

The following identities are special cases of known results. Since various treatments in the literature of the $ax+b$ group adopt different conventions/normalizations, and in some cases work with different (but unitarily equivalent) representations, we give a full statement and proof of these identities for sake of completeness.

\begin{prop}[Explicit orthogonality relations]\label{p:orthogonality-ax+b}
Let $\eta_1,\eta_2,\xi_1,\xi_2\in C_c^2(\Real_+^*)$. Then
%
\begin{subequations}
\begin{align}
\label{eq:ax+b_orth++}
\pair{  \xi_1*_{\pi_+}\eta_1 }{ \xi_2*_{\pi_+}\eta_2 }_{L^2(G)}
 & =  \pair{ \eta_2 }{ \eta_1 }_{\cH}\pair{  K^{-\frac{1}{2}}\xi_1 }{ K^{-\frac{1}{2}}\xi_2 }_{\cH}.\\
\label{eq:ax+b_orth--}
 \pair{  \xi_1*_{\pi_-}\eta_1 }{ \xi_2*_{\pi_-}\eta_2 }_{L^2(G)}
 & =  \pair{ \eta_2 }{ \eta_1 }_{\cH}\pair{  K^{-\frac{1}{2}}\xi_1 }{ K^{-\frac{1}{2}}\xi_2 }_{\cH}.\\
\label{eq:ax+b_orth+-}
\pair{  \xi_1*_{\pi_+}\eta_1 }{ \xi_2*_{\pi_-}\eta_2 }_{L^2(G)} & =  0.
\end{align}
\end{subequations}
\end{prop}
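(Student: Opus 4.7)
The plan is to prove all three identities by reducing the integration over $G$ to Plancherel's theorem on $\Real$ (in the $b$-variable), followed by a direct multiplicative change of variables in the $a$-variable. All the integrals involved are absolutely convergent because the vectors lie in $C_c^2(\Real_+^*)$, so Fubini manipulations are unproblematic.

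First, I would combine the Fubini theorem with the identity \eqref{eq:coeff-as-fourier}. For fixed $a$, the functions $b \mapsto (\xi_j *_{\pi_+}\eta_j)(b,a)$ and $b \mapsto (\xi_j *_{\pi_-}\eta_j)(b,a)$ are the Fourier transforms of $f_j^{(a)} \defeq \imath K^{-1}({}_a\xi_j\overline{\eta_j}) \in L^1\cap L^2(\Real)$ evaluated at $\chi_b$ and $\chi_{-b}$, respectively. By Plancherel on $\Real$ (with our chosen normalization, a unitary on $L^2$), for the $(+,+)$ case I would obtain
\[
\int_{\Real} (\xi_1*_{\pi_+}\eta_1)(b,a)\,\overline{(\xi_2*_{\pi_+}\eta_2)(b,a)}\,db
= \int_0^\infty t^{-2}\xi_1(at)\overline{\xi_2(at)}\,\overline{\eta_1(t)}\eta_2(t)\,dt,
\]
after noting that each $f_j^{(a)}$ vanishes on $\Real\setminus\Real_+^*$ and equals $t^{-1}\xi_j(at)\overline{\eta_j(t)}$ on $\Real_+^*$. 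The $(-,-)$ case is identical because $b\mapsto -b$ preserves Lebesgue measure on $\Real$.

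Next, I would integrate against $a^{-2}\,da$, apply Fubini, and substitute $s=at$ in the inner $a$-integral (so that $da = t^{-1}\,ds$ and $a^{-2} = s^{-2}t^2$); this produces a factorization
\[
\int_0^\infty a^{-2}\,\xi_1(at)\overline{\xi_2(at)}\,da
 = t\int_0^\infty s^{-2}\xi_1(s)\overline{\xi_2(s)}\,ds,
\]
so that the whole expression becomes the product of
$\int_0^\infty t^{-1}\overline{\eta_1(t)}\eta_2(t)\,dt = \pair{\eta_2}{\eta_1}_{\cH}$
and
$\int_0^\infty s^{-2}\xi_1(s)\overline{\xi_2(s)}\,ds = \int_0^\infty (s^{-1/2}\xi_1(s))\overline{(s^{-1/2}\xi_2(s))}\,s^{-1}ds = \pair{K^{-\frac{1}{2}}\xi_1}{K^{-\frac{1}{2}}\xi_2}_{\cH}$.
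This yields \eqref{eq:ax+b_orth++} and, by the same computation, \eqref{eq:ax+b_orth--}.

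Finally, for the cross term \eqref{eq:ax+b_orth+-}, the key observation is that while $f_1^{(a)}$ is supported on $\Real_+^*$, the function $b\mapsto (\xi_2*_{\pi_-}\eta_2)(b,a) = \cF(f_2^{(a)})(\chi_{-b})$ is the Fourier transform (at $\chi_b$) of the reflected function $x\mapsto f_2^{(a)}(-x)$, which is supported on $\Real_-^*$. Plancherel on $\Real$ then gives the inner product of two $L^2$-functions with disjoint supports, which vanishes; integrating against $a^{-2}\,da$ preserves this vanishing. There is no genuine obstacle here — the main care required is simply keeping track of the normalization conventions for $\cF$, the measure $t^{-1}dt$ on $\cH$, and the modular factor $a^{-2}$ in the Haar measure, so that the Duflo–Moore operator $K$ (rather than $K^{-1}$ or $K^{\pm 1/2}$) appears with the correct power.
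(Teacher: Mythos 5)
Your proof is correct. For \eqref{eq:ax+b_orth++} and \eqref{eq:ax+b_orth--} you follow essentially the same route as the paper: rewrite the coefficient functions via \eqref{eq:coeff-as-fourier}, apply unitarity of $\cF$ in the $b$-variable, and then perform a multiplicative change of variables in $a$ (your substitution $s=at$ is the paper's $a\mapsto a/b$ in different clothing), and your bookkeeping of the weights $t^{-1}$, $t^{-2}$ and $a^{-2}$ correctly produces $\pair{\eta_2}{\eta_1}_{\cH}\pair{K^{-1/2}\xi_1}{K^{-1/2}\xi_2}_{\cH}$. Where you genuinely diverge is the cross term \eqref{eq:ax+b_orth+-}: the paper packages the first two relations into isometries $V_{\pm}$ intertwining $\lm$ with $\pi_{\pm}$ and invokes Schur's lemma to conclude $V_-^*V_+=0$, whereas you observe directly that after the Fourier transform in $b$ one is pairing an $L^2$-function supported on $\Real_+^*$ against one supported on $\Real_-^*$, so the inner $b$-integral already vanishes for each fixed $a$. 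Your argument is more elementary and self-contained (no appeal to irreducibility or to the intertwining formalism), at the cost of being tied to the specific realization of $\pi_{\pm}$ on $L^2(\Real_+^*,t^{-1}dt)$; the paper's Schur-lemma argument is the one that generalizes to arbitrary pairs of inequivalent square-integrable irreducibles. Both are valid here, and your disjoint-support observation is in fact the concrete reason the two representations are inequivalent in the first place.
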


\begin{proof}
We already know that convenient coefficient functions belong to $(C_0\cap L^1)(G)$, so certainly they belong to $L^2(G)$; thus the inner products on the left-hand sides of  \eqref{eq:ax+b_orth++}, \eqref{eq:ax+b_orth--} and \eqref{eq:ax+b_orth+-} are all well-defined.

We treat \eqref{eq:ax+b_orth++} first. By \eqref{eq:coeff-as-fourier}, and unitarity of $\cF$,
\[ \begin{aligned}
\pair{  \xi_1*_{\pi_+}\eta_1 }{ \xi_2*_{\pi_+}\eta_2 }_{L^2(G)}
& =
 \int_{\Real_+}\int_\Real \xi_1*_{\pi_+}\eta_1(b,a)\overline{\xi_2*_{\pi_+}\eta_2(b,a)}
\ db\ \frac{da}{a^2}\\
& = 
 \int_{\Real_+} \pair{ \cF\imath(K^{-1}(_a\xi_1\overline{\eta_1})) }{ \cF\imath(K^{-1}(_a\xi_2\overline{\eta_2}) )}_{L^2(\widehat{\Real})} \; \frac{da}{a^2}\\
& =
 \int_{\Real_+} \pair{ \imath K^{-1}(_a\xi_1\overline{\eta_1}) }{ \imath K^{-1}(_a\xi_2\overline{\eta_2}) }_{L^2(\Real)} \; \frac{da}{a^2}\,.
\end{aligned} \]
But now, direct calculation shows this is equal to
\[ \begin{aligned}
 & \phantom{=} 
 \int_{\Real_+} \int_{\Real_+}\frac{\xi_1(ab)\overline{\eta_1(b)}}{b}
  \frac{\overline{\xi_2(ab)} \eta_2(b)}{b}\,db\;\frac{da}{a^2}\\
& =
 \int_{\Real_+}\int_{\Real_+}\frac{\xi_1(ab)\overline{\xi_2(ab)}}{ab^2}\overline{\eta_1(b)}{\eta_2(b)}\frac{da}{a}db\\
& =
 \int_{\Real_+}\int_{\Real_+}\frac{\xi_1(a)\overline{\xi_2(a)}}{ab}\overline{\eta_1(b)}{\eta_2(b)}\frac{da}{a}db\\
& =
 \pair{ \eta_2 }{ \eta_1 }_{\cH}\pair{  K^{-\frac{1}{2}}(\xi_1) }{ K^{-\frac{1}{2}}(\xi_2) }_{\cH},
\end{aligned} \]
where we used the change of variable $a\mapsto \frac{a}{b}$, and the fact that $a^{-1}da$ is invariant under multiplication. This shows that \eqref{eq:ax+b_orth++} holds. The proof that \eqref{eq:ax+b_orth--} holds is similar and we omit the details.

To prove that \eqref{eq:ax+b_orth+-} holds, consider the operators $V_{\pm}:\cH\rightarrow L^2(G)$, $V_{\pm}(\eta)(b,a)=\pair{ \eta }{ \pi_\pm(b,a)\xi_\pm }$, where
$\xi_+= {\norm{K^{-\frac{1}{2}}\xi_1}_{\cH}}^{-1}\xi_1$
and
$\xi_-= {\norm{K^{-\frac{1}{2}}\xi_2}_{\cH}}^{-1}\xi_2$. 
It follows from the first two orthogonality relations that $V_+$ and $V_-$ are isometries, intertwining $\lambda$ with $\pi_+$ and $\pi_{-}$ respectively. Thus $V_-^*V_+$ intertwines $\pi_+$ and $\pi_-$, so equals $0$ by Schur's lemma. Hence
 \[ \pair{  \xi_1*_{\pi_+}\eta_1 }{ \xi_2*_{\pi_-}\eta_2 }_{L^2(G)}=\pair{  V_{\xi_2}(\eta_2) }{ V_{\xi_1}(\eta_1) }_{L^2(G)}
 =0 \] 
as required.
\end{proof}

\begin{rem}
The orthogonality relations hold in greater generality: namely, whenever $\eta_1,\eta_2\in \cH$ and $\xi_1,\xi_2\in \dom(K^{-\frac{1}{2}})$. Since we only need orthogonality for convenient coefficient functions, we omit the details.
\end{rem}

\begin{prop}[The key estimate]\label{p:ax+b_mainbound}
Define a bilinear map $D_\flat: \cC \times \cC \to \Cplx $ by
\begin{equation}
D_\flat(f,g) = \int_G (\madb f) g  \,d\mu
 = -\frac{1}{2\pi i}\int_G a\frac{\partial}{\partial b} f(b,a) g(b,a) \,d\mu(b,a) \ .
\end{equation}
Then 
$\abs{ D_\flat(f,g)} \leq \norm{f}_{\FA(G)} \norm{g}_{\FA(G)}$
for all $f,g\in \cC$.
\end{prop}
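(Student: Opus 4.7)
The plan is to exploit the direct-sum decomposition $\FA(G)=\FA_{\pi_+}\oplus_1\FA_{\pi_-}$ from Proposition~\ref{p:A(ax+b)-decomp} together with the orthogonality relations of Proposition~\ref{p:orthogonality-ax+b} and the action of $\madb$ on coefficient functions given by~\eqref{eq:deriv-coeff-formula}. First I would decompose $f=f_++f_-$ and $g=g_++g_-$ with $f_\pm,g_\pm\in\cC_\pm$, and analyse the four bilinear components of $D_\flat(f,g)$ separately. Since $\madb$ preserves each of $\cC_+$ and $\cC_-$, and complex conjugation swaps $\FA_{\pi_+}$ with $\FA_{\pi_-}$, upon rewriting $D_\flat(u,v)=\pair{\madb u}{\overline{v}}_{L^2(G)}$ the cross-orthogonality relation~\eqref{eq:ax+b_orth+-} immediately kills the two ``same-sign'' components $D_\flat(f_+,g_+)$ and $D_\flat(f_-,g_-)$.

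The substantive work is bounding the mixed-sign pieces. Take, for example, $f\in\cC_+$ and $g\in\cC_-$, so that $\overline{g}\in\cC_+$. Writing $f=\sum_{n=1}^N \xi_n*_{\pi_+}\eta_n$ and $\overline{g}=\sum_{m=1}^M \tilde\xi_m*_{\pi_+}\tilde\eta_m$ as finite sums of convenient coefficient functions, the derivative formula~\eqref{eq:deriv-coeff-formula} together with the orthogonality relation~\eqref{eq:ax+b_orth++} applied term by term gives
\[
D_\flat(f,g) \;=\; \sum_{n,m} \pair{\tilde\eta_m}{\eta_n}_{\cH} \,\pair{K^{-1/2}(K\xi_n)}{K^{-1/2}\tilde\xi_m}_{\cH}.
\]
The crucial observation is that the second inner product equals $\int_0^\infty \xi_n(t)\overline{\tilde\xi_m(t)}\,t^{-1}\,dt=\pair{\xi_n}{\tilde\xi_m}_{\cH}$: the extra $K$ produced by $\madb$ balances exactly against the Duflo--Moore weights $K^{-1/2}$ coming from the orthogonality relations. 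This cancellation is the geometric reason for the factor of $a$ in the definition of $\madb$, and is precisely where the non-unimodularity of $G$ is reconciled with the Hilbert-space norm on $\cH$. We are left with the clean identity $D_\flat(f,g)=\sum_{n,m}\pair{\xi_n}{\tilde\xi_m}_{\cH}\pair{\tilde\eta_m}{\eta_n}_{\cH}$.

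The bound is now immediate. The assignment $(\xi\tp\overline{\eta},\,\tilde\xi\tp\overline{\tilde\eta})\mapsto \pair{\xi}{\tilde\xi}_{\cH}\pair{\tilde\eta}{\eta}_{\cH}$ has norm at most $1$ on simple tensors, and therefore extends to a contractive bilinear form on $(\cH\ptp\overline{\cH})\times(\cH\ptp\overline{\cH})$. Using that $\Psi$ is an isometric isomorphism (Remark~\ref{r:A_pi_of_irred}) and that conjugation is an isometry between $\FA_{\pi_+}$ and $\FA_{\pi_-}$, taking the infimum over representations of $f$ and of $\overline{g}$ yields $\abs{D_\flat(f,g)}\leq \norm{f}_{\FA(G)}\norm{\overline{g}}_{\FA(G)}=\norm{f}_{\FA(G)}\norm{g}_{\FA(G)}$; the other mixed-sign case $f\in\cC_-,\,g\in\cC_+$ is symmetric. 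Combining with the $\ell^1$-structure of the direct sum gives
\[
\abs{D_\flat(f,g)} \leq \norm{f_+}_{\FA(G)}\norm{g_-}_{\FA(G)}+\norm{f_-}_{\FA(G)}\norm{g_+}_{\FA(G)} \leq \norm{f}_{\FA(G)}\norm{g}_{\FA(G)}.
\]
I expect the main obstacle to be not any single estimate but rather the bookkeeping: keeping the sesquilinear/bilinear distinction straight, tracking which representations live on which side of the complex conjugation, and verifying that the Duflo--Moore cancellation between $K$ and $K^{-1/2}$ is exact and not merely approximate.
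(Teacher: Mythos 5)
Your proposal is correct and follows essentially the same route as the paper: decompose via $\FA(G)=\FA_{\pi_+}\oplus_1\FA_{\pi_-}$, kill the same-sign terms with the cross-orthogonality relation, and for the mixed terms use \eqref{eq:deriv-coeff-formula} together with \eqref{eq:ax+b_orth++} so that the $K$ introduced by $\madb$ cancels exactly against the two $K^{-1/2}$ weights, leaving the contractive form $\pair{\xi_1}{\xi_2}\pair{\eta_2}{\eta_1}$. The only cosmetic difference is that the paper packages this form as a contractive functional $\Phi$ on $\cH\ptp\overline{\cH}\ptp\overline{\cH}\ptp\cH$ composed with $(\Psi\otimes\Psibar)^{-1}$, rather than taking infima over finite-sum representations.
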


\begin{proof}
Let $f,g\in \cC$. We may write $f=f_1+f_2$ and $g_1=g_1+g_2$ where $f_1,g_1\in \cC_+$ and $f_2,g_2\in \cC_{-}$; moreover, $\norm{f}_{\FA(G)}=\norm{f_1}_{\FA(G)}+\norm{f_2}_{\FA(G)}$ and
$\norm{g}_{\FA(G)}=\norm{g_1}_{\FA(G)}+\norm{g_2}_{\FA(G)}$. 
By~\eqref{eq:deriv-coeff-formula}, the orthogonality relations in Proposition~\ref{p:orthogonality-ax+b}, and the fact
that $\overline{A_{\pi_+}}=A_{\pi_{-}}$,
\[ \begin{aligned} 
D_\flat(f_1,g_1)  = \pair{ \madb(f_1) }{\overline{g_1} }_{L^2(G)} = 0\,, \\
D_\flat(f_2,g_2)  = \pair{ \madb(f_2) }{\overline{g_2} }_{L^2(G)} = 0\,.
\end{aligned} \]

Define the contractive linear map 
 \begin{equation*}
 \Phi:\cH\ptp\overline{\cH}\ptp\overline\cH\ptp{\cH} \rightarrow \Cplx, \quad 
 \Phi(\xi_1\otimes\overline{\eta_1}\otimes\overline{\xi_2}\otimes{\eta_2})=\pair{ \xi_1 }{ \xi_2 }\pair{ \eta_2 }{ \eta_1 },
 \end{equation*}
and recall that $\Psi\otimes\Psibar: \cH\ptp\overline{\cH} \ptp\overline{\cH}\ptp \cH \to \FA_{\pi_+}\ptp \FA_{\pi_-}$ is an isometric isomorphism.

We claim that
\[ D_\flat(f_1, g_2)= (\Phi\circ(\Psi\otimes\Psibar)^{-1})(f_1\otimes g_2). \]
For, by linearity, it suffices to verify this in the special case when $f_1$ and $g_2$ are convenient coefficient functions (associated to $\pi_+$ and $\pi_{-}$ respectively). So, suppose that
\[ f_1 = \Psi(\xi_1\otimes\overline{\eta_1})\quad,\quad g_2 = \Psibar(\overline{\xi_2}\otimes\eta_2) = \overline{\Psi(\xi_2\tp\overline{\eta_2})} \]
for some $\xi_1,\xi_2,\eta_1,\eta_2\in  C_c^2(\Real_+^*)$. Then 
\eqref{eq:deriv-coeff-formula} and \eqref{eq:ax+b_orth++} give
\[ \begin{aligned}
D_\flat( f_1 , g_2 )
 & = \int_G (\madb f_1) g_2  \,d\mu \\
 & =  \pair{ \Psi(K\xi_1 \tp \overline{\eta_1})  }{ \Psi(\xi_2\tp \overline{\eta_2}) }_{L^2(G)}\\
 & =   \pair{K^{-\frac{1}{2}}K\xi_ 1}{ K^{-\frac{1}{2}}\xi_2}_\cH
	\pair{\eta_2}{\eta_1}_\cH \\
 & =  \pair{ \xi_1 }{ \xi_2 }_\cH\pair{ \eta_2 }{ \eta_1 }_\cH
 & =  \Phi\circ(\Psi\otimes \Psibar)^{-1}(f_1 \otimes g_2 ),
\end{aligned} \]
as required. This proves our claim.

Arguing similarly, we also have
\[ D_\flat(f_2,g_1) = \Phi\circ(\Psibar\otimes\Psi)^{-1}( f_2 \tp g_1). \]
Hence, putting everything together,
\[ \begin{aligned}
|D_\flat(f_1+f_2,g_1+g_2)|
&= |D_\flat(f_1,g_2)+D_\flat(f_2,g_1)|  \\
&\leq  |D_\flat(f_1,g_2)|+|D_\flat(f_2,g_1)|
\\
&\leq  \norm{f_1}_{\FA(G)}\norm{g_2}_{\FA(G)}+ \norm{f_2}_{\FA(G)} \norm{g_1}_{\FA(G)}
\\
&\leq  (\norm{f_1}_{\FA(G)}+\norm{f_2}_{\FA(G)})(\norm{g_1}_{\FA(G)}+\norm{g_2}_{\FA(G)}) \\
& = \norm{f}_{\FA(G)}\norm{g}_{\FA(G)}
\end{aligned} \]
and the proposition is proved.
\end{proof}

We now have everything in place.
\begin{thm}\label{t:ax+b_non-WA}
There is a continuous extension of $D_\flat$ to a non-zero, bounded, cyclic derivation $D:\FA(G)\to\FA(G)^*$. In particular, $\FA(G)$ is not cyclically amenable, so is not weakly amenable.
\end{thm}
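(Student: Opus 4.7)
The plan is to apply Lemma~\ref{l:fussy-extension} with $A = \FA(G)$, $V = \cC$, and $B = \cB$ (the algebra from Corollary~\ref{c:containment2}). First I extend $D_\flat$ from $\cC\times\cC$ to $\cB\times\cB$ via the same formula $D_\flat(f,g) = \int_G (\madb f)\,g\,d\mu$; this is well-defined because $\madb(\cB)\subseteq L^1(G)$ by Corollary~\ref{c:containment2} and $\cB\subseteq\FA(G)\subseteq L^\infty(G)$. With this in place I verify the three hypotheses of the extension lemma.

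Hypothesis (iii) is exactly Proposition~\ref{p:ax+b_mainbound}. Hypothesis (ii), the Leibniz identity on $V=\cC$, is immediate from the pointwise product rule $\madb(fg) = (\madb f)g + f(\madb g)$: for $f,g,h\in\cC$,
\[ D_\flat(fg,h) = \int_G \madb(fg)\,h\,d\mu = \int_G(\madb f)gh\,d\mu + \int_G(\madb g)fh\,d\mu = D_\flat(f,gh) + D_\flat(g,hf). \]
For hypothesis (i), the key step is integration by parts in the $b$-variable, which reformulates $D_\flat(f,g) = -\int_G f\,(\madb g)\,d\mu$, and hence $D_\flat(f,g)=-D_\flat(g,f)$, for $f,g\in\cB$. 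This is valid because convenient coefficient functions are, by \eqref{eq:coeff-as-fourier}, Fourier transforms in the $b$-variable of $C_c^2(\Real)$-functions and so decay faster than any polynomial in $|b|$; products and $b$-derivatives of such functions inherit the decay needed for the boundary terms to vanish. From this reformulation, for each fixed $w\in\cC$ both linear maps $D_\flat(\blank,w)$ and $D_\flat(w,\blank)$ on $\cB$ are integration against the fixed $L^1$-function $\pm\madb w$, and since $\norm{f}_\infty\leq\norm{f}_{\FA(G)}$ they are $\norm{\cdot}_{\FA(G)}$-continuous.

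Lemma~\ref{l:fussy-extension} then yields a continuous derivation $D:\FA(G)\to\FA(G)^*$ extending $D_\flat$. Cyclicity of $D$ follows because the identity $D_\flat(f,g) = -D_\flat(g,f)$ holds on the dense subspace $\cC$ and passes by continuity to all of $\FA(G)\times\FA(G)$. To see $D\neq 0$, I reuse the computation from the proof of Proposition~\ref{p:ax+b_mainbound}: picking any nonzero $\xi,\eta\in C_c^2(\Real_+^*)$ and setting $f=\Psi(\xi\tp\overline{\eta})\in\cC_+$ and $g=\overline{f}=\Psibar(\overline{\xi}\tp\eta)\in\cC_-$ yields $D(f)(g)=D_\flat(f,g)=\norm{\xi}_\cH^2\,\norm{\eta}_\cH^2>0$. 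The statements about failure of cyclic and weak amenability are then immediate from the definitions. The one genuine technical point is justifying the integration-by-parts step, where the rapid decay of Fourier transforms of $C_c^2$-functions and their products in the $b$-variable supplies the needed vanishing of boundary terms.
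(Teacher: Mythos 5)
Your proposal is correct and follows essentially the same route as the paper: both apply Lemma~\ref{l:fussy-extension} with $A=\FA(G)$, $V=\cC$, $B=\cB$, use Corollary~\ref{c:containment2} for well-definedness and continuity in the second variable, integration by parts for antisymmetry (hence continuity in the first variable and cyclicity), Proposition~\ref{p:ax+b_mainbound} for the norm bound, and the orthogonality computation $D_\flat(f,\overline{f})=\norm{\xi}^2\norm{\eta}^2$ to see $D\neq 0$. Your slightly more detailed justification of the integration-by-parts step (decay in $b$ of Fourier transforms of $C_c^2$ functions) is a fine elaboration of the paper's terser appeal to $\madb(\cB)\subseteq C_0(G)$.
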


\begin{proof}
Observe that the bilinear map $D_\flat:\cC\times\cC\to\Cplx$ is not identically zero, since\hfill\newline
$D_\flat(\xi*_{\pi_+}\xi,\overline{\xi*_{\pi_+}\xi})=\norm{\xi}^4$ for all $\xi\in C_c^2(\Real_+^*)$.

Let $\cB$ be the algebra generated by the convenient coefficient functions. 
We wish to apply Lemma~\ref{l:fussy-extension} to the bilinear map $D_\flat:\cC\times\cC\to\Cplx$, with $A=\FA(G)$, $V=\cC$ and $B=\cB$.
Recall for sake of clarity that
\[ D_\flat( f,g) = \int_G \madb(f)g\,d\mu = -\frac{1}{2\pi i} \int_0^\infty \int_{-\infty}^\infty a \frac{\partial f}{\partial b}(b,a) g(b,a)\; db\frac{da}{a^2} \qquad(f,g\in\cB). \]
By Corollary~\ref{c:containment2} we have $\madb(\cB)\subseteq (\FA\cap L^1)(G)$. So we may extend $D_\flat$ to a well-defined bilinear map $\cB\times\cB\to\Cplx$, which is $\norm{\cdot}_{\FA(G)}$-continuous in the second variable. (This is the place where we need $\madb$ to take convenient functions to \emph{integrable} ones.)
On the other hand, integrating by parts (which is justified, since $M_a\partial_b(\cB)\subseteq C_0(G)$) we see that $D_\flat$ is an \emph{anti-symmetric} bilinear map on $\cB\times\cB$; therefore it is also $\norm{\cdot}_{\FA(G)}$-continuous in the \emph{first} variable.

Thus conditions~\ref{li:bounded1} and \ref{li:leibniz} of Lemma~\ref{l:fussy-extension} hold. We already know by Proposition~\ref{p:ax+b_mainbound} that condition~\ref{li:bounded2} of that lemma holds. Therefore, $D$ is a non-zero derivation $\FA(G)\to\FA(G)^*$. It is a cyclic derivation, since $D_\flat$ is an antisymmetric bilinear form.
\end{proof}

\end{subsection}

\end{section}

\begin{section}{Application to Fourier algebras on connected Lie groups}\label{s:lie-struct}
By Proposition~\ref{p:WA-hered} and the results of the previous sections, if we wish to show $\FA(G)$ is not cyclically amenable, it suffices to show that $G$ contains a closed copy of any of the following groups: $\SU_2(\Cplx)$, $\SO_3(\Real)$, or the real $ax+b$ group.
In this section we will use structure theory for Lie groups to show that many connected Lie groups have closed copies of these key examples, and hence have Fourier algebras which are not cyclically amenable.

Let us review some definitions and fix some terminology; our sources are \cite{HN_LieBook} for general background results in Lie theory, and
\cite{Knapp_beyond} for the Iwasawa decomposition. To be consistent with these sources, we follow the convention that ``\dt{simply connected}'' means ``trivial $\pi_1$\/'', and reserve the term ``\dt{$1$-connected}'' to mean ``trivial~$\pi_0$ and~$\pi_1$''. Thus a Lie group is $1$-connected if and only if it is both connected and simply connected.

\begin{dfn}[Semisimple Lie groups]
Let $G$ be a Lie group. The \dt{(solvable) radical} of $G$, denoted by $\rad(G)$, is the largest solvable, connected, normal subgroup of~$G$. If $\rad(G)=\{e_G\}$ then we say $G$ is \dt{semisimple.}
\end{dfn}

The following standard result from the theory of compact Lie groups has been mentioned earlier (Remark~\ref{r:copy-of-so3-or-su2}) but we restate it for emphasis.
\begin{prop}\label{p:compact-case}
Every compact, connected, non-abelian Lie group contains either a closed copy of $\SO_3(\Real)$ or $\SU_2(\Cplx)$.
\end{prop}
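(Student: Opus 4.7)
The plan is to reduce to a statement about Lie algebras and then integrate back. Let $\fg$ denote the Lie algebra of $G$. Because $G$ is compact, connected, and non-abelian, $\fg$ is a non-zero compact real Lie algebra that fails to be abelian. A standard fact from the structure theory of compact Lie algebras is the orthogonal splitting $\fg = Z(\fg) \oplus [\fg,\fg]$, in which the derived subalgebra $[\fg,\fg]$ is itself compact and moreover \emph{semisimple}. Since $\fg$ is non-abelian, $[\fg,\fg] \neq 0$, so it suffices to exhibit a copy of $\mathfrak{su}(2)$ inside $[\fg,\fg]$ and then exponentiate it to a closed subgroup of $G$.

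To exhibit such a copy, I would pass to the complexification $[\fg,\fg]_{\Cplx}$, a non-zero complex semisimple Lie algebra. Fixing a Cartan subalgebra and using the root-space decomposition, there exists at least one non-zero root $\alpha$, and the associated $\mathfrak{sl}_2$-triple $(e_\alpha, h_\alpha, e_{-\alpha})$ contains, after a suitable normalization of the root vectors relative to the compact real form $[\fg,\fg]$, a real subalgebra isomorphic to $\mathfrak{su}(2)$. Because the root vectors are chosen from the complexification of the \emph{compact} form, this $\mathfrak{su}(2)$ genuinely lives inside $[\fg,\fg] \subseteq \fg$.

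The final step is integration. The $1$-connected Lie group with Lie algebra $\mathfrak{su}(2)$ is $\SU_2(\Cplx)$, so the inclusion $\mathfrak{su}(2) \hookrightarrow \fg$ lifts via the subalgebra--subgroup correspondence to a Lie group homomorphism $\varphi : \SU_2(\Cplx) \to G$. The image $H \defeq \varphi(\SU_2(\Cplx))$ is isomorphic to $\SU_2(\Cplx)/\ker\varphi$; since $\ker\varphi$ is a discrete normal subgroup of the connected group $\SU_2(\Cplx)$, it is central, and the centre of $\SU_2(\Cplx)$ equals $\{\pm I\}$. Therefore $H$ is isomorphic either to $\SU_2(\Cplx)$ or to $\SU_2(\Cplx)/\{\pm I\} = \SO_3(\Real)$. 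The one subtle point worth flagging is the \emph{closedness} of $H$ in $G$: integral subgroups of general Lie groups need not be closed (and this matters, since Proposition~\ref{p:WA-hered} requires a closed subgroup), but here closedness is automatic because $H$ is the continuous image of the compact group $\SU_2(\Cplx)$ and is therefore compact, hence closed in the Hausdorff group $G$.
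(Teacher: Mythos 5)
Your proof is correct and follows essentially the same route the paper sketches: reduce to the (semisimple part of the) Lie algebra, locate a three-dimensional simple subalgebra attached to a root, exponentiate the resulting $\mathfrak{su}(2)$, and note that the image is compact, hence closed, with kernel contained in $\{\pm I\}$. You are in fact somewhat more careful than the paper's two-sentence sketch, which glosses over the centre by calling the Lie algebra semisimple and names the subalgebra $\fsl_2(\Real)$ (a non-compact real form that cannot embed in the Lie algebra of a compact group) where $\mathfrak{su}(2)$ is meant.
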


The key point is that the Lie algebra of such a group is a semisimple real Lie algebra, and basic structure theory for such algebras implies the existence of a subalgebra isomorphic to~$\fsl_2(\Real)$.
One then exponentiates this subalgebra and appeals to further results from Lie theory to show that the subgroup generated in this way is closed.

The next result seems equally well-known to specialists, but we were unable to locate an explicit statement in the literature.

\begin{prop}\label{p:ss-non-compact}
Every non-compact, connected, semisimple Lie group contains a closed subgroup isomorphic to the $ax+b$ group.
\end{prop}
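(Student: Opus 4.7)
The plan is to locate a closed $2$-dimensional non-abelian subgroup of $G$ by exponentiating a judiciously chosen Lie subalgebra inside the solvable Iwasawa factor $AN$.

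Invoke the Iwasawa decomposition $G=KAN$ for a connected semisimple Lie group: $K$ is maximal compact, $\fa\subseteq\lie{p}$ is maximal abelian in the non-compact part of the Cartan decomposition $\fg=\fk\oplus\lie{p}$, and $\fn=\bigoplus_{\alpha\in\Sigma^+}\fg_\alpha$ is the sum of positive restricted root spaces. The crucial properties we shall use are: $AN$ is a closed subgroup of $G$, the multiplication $A\times N\to AN$ is a diffeomorphism, and the exponentials $\exp_A\colon \fa\to A$ and $\exp_N\colon \fn\to N$ are diffeomorphisms onto. Since $G$ is non-compact, $\fa\neq 0$, so the restricted root system is non-empty.

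Fix $\alpha\in\Sigma^+$, a non-zero $X\in\fg_\alpha$, and $H\in\fa$ with $\alpha(H)=1$. Then $[H,X]=\alpha(H)X=X$, so $\fs:=\Real H\oplus\Real X$ is a Lie subalgebra of $\fa\oplus\fn$ whose bracket relation is exactly the one defining the $ax+b$ Lie algebra. Set $S:=\{\exp(tH)\exp(sX):t,s\in\Real\}\subseteq AN$. Using $\Ad(\exp(t'H))X=e^{t'}X$, a short computation yields
\[
\exp(tH)\exp(sX)\cdot\exp(t'H)\exp(s'X)
=\exp\!\big((t+t')H\big)\exp\!\big((se^{-t'}+s')X\big),
\]
so $S$ is a subgroup of $AN$ with product $(t,s)\cdot(t',s')=(t+t',\,se^{-t'}+s')$.

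The map $\Real^2\to S$, $(t,s)\mapsto\exp(tH)\exp(sX)$, factors through the diffeomorphism $\fa\oplus\fn\to A\times N\to AN$, hence is itself a diffeomorphism onto its image; in particular $S$ is closed in $AN$, and therefore closed in $G$. Under the identification $(t,s)\leftrightarrow\twomat{e^t}{se^t}{0}{1}$ the group law on $S$ matches the matrix law of the real $ax+b$ group; equivalently, $S$ is a connected, centerless, $2$-dimensional Lie group with Lie algebra $\fs$, and the real $ax+b$ group is the unique such Lie group up to isomorphism. The main obstacle in this argument is the closedness of $S$ in $G$; it is handled cleanly by building $S$ inside $AN$, where the exponentials provide a globally well-behaved coordinate system and save us from the usual pathologies of analytic subgroups.
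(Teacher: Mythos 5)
Your proof is correct and follows essentially the same route as the paper's: both invoke the Iwasawa decomposition to produce a two-dimensional subalgebra $\Real H+\Real X\subseteq\fa\oplus\fn$ with $[H,X]=X$ and then exponentiate it inside the closed subgroup $AN$. The only difference is cosmetic --- where the paper cites general facts about $1$-connected subgroups of $AN$ to get closedness and to identify the result with the $ax+b$ group, you verify the group law and the closedness by direct computation using the global diffeomorphism $\fa\times\fn\to AN$.
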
 

Our proof relies on the Iwasawa decomposition of such a group: see, for instance, \cite[\S VI.4]{Knapp_beyond}.
It incorporates some suggestions communicated to the first author by V.~Protsak on the {\it MathOverflow} website.

\begin{proof}
Let $G$ be such a group and let $\fg$ be its Lie algebra. The Iwasawa decomposition of $\fg$ exhibits it as a direct sum of subalgebras $\fg=\fk\oplus\fa\oplus\fn$, where $\fa$ is abelian and $\fn$ is nilpotent. Since $G$ is not compact, the construction of the Iwasawa decomposition ensures we also have the following properties:
\begin{newnum}
\item both $\fa$ and $\fn$ are non-zero;
\item for each $x\in \fa$, the operator $\ad_x:\fg\to\fg$ is diagonalizable with real eigenvalues, and maps $\fn$ to $\fn$.
\end{newnum}
(See \cite[Proposition 6.43 and Lemma 6.45]{Knapp_beyond} for details.)
In particular, there exists $u\in\fa$ such that $\ad_u: \fn\to\fn$ has eigenvalue~$1$. Let $v$ be any corresponding eigenvector, and let $\fh=\Real u +\Real v$. This is a direct sum, since $u$ and $v$ are linearly independent. Note that, up to isomorphism of Lie groups, the $ax+b$ group is the unique $1$-connected Lie group with Lie algebra~$\fh$.

Let $AN$ be the subgroup of $G$ obtained by applying the exponential map $\exp_G: \fg \to G$ to the subalgebra $\fa\oplus\fn$. By the proof or construction of the Iwasawa decomposition (see e.g.~\cite[Theorem 6.46]{Knapp_beyond}), $AN$ is a closed and $1$-connected subgroup of~$G$. Therefore, any subgroup obtained by applying $\exp_G$ to a subalgebra of $\fa\oplus\fn$ is also closed and $1$-connected. (This follows from \cite[Lemma 6.44]{Knapp_beyond}; it is also a special case of more general results about $1$-connected solvable Lie groups, see \cite[Proposition 11.2.15]{HN_LieBook}.)

So putting $H=\exp_G(\fh)$ we see that $H$ is a closed, $1$-connected subgroup of $G$, whose Lie algebra is $\fh$. By the remarks of the previous paragraph, $H$ is the desired copy of the $ax+b$ group.
\end{proof}

What about connected Lie groups which are not semisimple? Here matters become more complicated if the fundamental group is non-trivial, since subalgebras of the Lie algebra of a group do not in general exponentiate to give closed subgroups. We therefore restrict attention to the $1$-connected setting, where the passage between Lie algebras and Lie groups works best.

\begin{prop}
Let $G$ be a $1$-connected Lie group that is not solvable. Then $G$ contains a closed subgroup isomorphic to $\SU_2(\Cplx)$, $\SO_3(\Real)$ or the $ax+b$ group.
\end{prop}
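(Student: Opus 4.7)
The plan is to reduce to the two propositions just proved, by peeling off the solvable radical via the global Levi decomposition. Specifically, since $G$ is $1$-connected, the Levi--Malcev theorem (see, e.g., \cite[Thm.~11.2.12]{HN_LieBook}) yields a semidirect product decomposition $G = \rad(G) \rtimes S$, where both $\rad(G)$ and $S$ are closed, $1$-connected subgroups of $G$, the Lie algebra of $S$ being a Levi complement $\fs$ of $\rad(\fg)$ in $\fg$. Because $G$ is not solvable, $\fg$ is not solvable, so $\fs \neq 0$ and $S$ is a non-trivial, $1$-connected, semisimple Lie group.

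First I would split into cases according to whether $S$ is compact. If $S$ is compact, then $S$ is a compact, connected, semisimple Lie group; since every non-trivial semisimple Lie group is non-abelian, Proposition~\ref{p:compact-case} supplies a closed subgroup of $S$ isomorphic to $\SO_3(\Real)$ or $\SU_2(\Cplx)$. If instead $S$ is non-compact, then Proposition~\ref{p:ss-non-compact} furnishes a closed subgroup of $S$ isomorphic to the real $ax+b$ group. In either case, being closed in $S$ and $S$ being closed in $G$, the resulting subgroup is closed in $G$, which is what we want.

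The main obstacle is justifying that the Levi subgroup $S$ is actually \emph{closed} in $G$ (without this, exponentiating $\fs$ might only give an immersed subgroup, and a copy of the target group in such an immersion need not be closed in $G$). The $1$-connectedness hypothesis is used precisely here: it is what upgrades the purely algebraic Levi--Malcev decomposition of $\fg$ to a genuine closed semidirect product decomposition of $G$. Once this global decomposition is in hand, the rest is a routine case-split invoking the two preceding propositions, so no further calculation is needed.
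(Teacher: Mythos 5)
Your proposal is correct and follows essentially the same route as the paper: both invoke the global Levi--Mal'cev decomposition $G\iso\rad(G)\rtimes S$ (valid because $G$ is $1$-connected), note that $S$ is a non-trivial connected semisimple Lie group since $G$ is not solvable, and then conclude by applying Propositions~\ref{p:compact-case} and~\ref{p:ss-non-compact} according to whether $S$ is compact. Your explicit remarks that a non-trivial semisimple group is non-abelian and that closedness passes through the chain of subgroups are points the paper leaves implicit, but the argument is the same.
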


\begin{proof}
Let $\fg$ and $\fr$ be the Lie algebras of $G$ and $\rad(G)$ respectively. Then $\fr$ is the largest solvable ideal in $\fg$, and it is a proper ideal since $G$ is not solvable. The quotient Lie algebra $\fs\defeq\fg/\fr$ is a semisimple Lie algebra, i.e.~it has no non-zero solvable ideals. By Levi's theorem (e.g.~\cite[Theorem 5.6.6]{HN_LieBook}) there is a Lie subalgebra $\fs\subseteq\fg$ such that $\fg\iso\fr\rtimes \fs$. Since $G$ is simply connected, this decomposition can be exponentiated to an isomorphism of Lie groups $G\iso\rad(G)\rtimes S$, where $S$ is the unique $1$-connected Lie group with Lie algebra~$\fs$. (See e.g.~\cite[\S1.12]{Knapp_beyond}.) Since $\fs$ is a non-zero semisimple Lie algebra, $S$ is a non-trivial, connected semisimple Lie group. The rest now follows from Propositions~\ref{p:compact-case} and \ref{p:ss-non-compact}.
\end{proof}

From this and Proposition~\ref{p:WA-hered}, we obtain the final theorem of this section.
\begin{thm}\label{t:should-be-solvable}
Let $G$ be a $1$-connected Lie group such that $\FA(G)$ is cyclically amenable. Then $G$ is solvable, and contains no closed copy of the $ax+b$ group.
\end{thm}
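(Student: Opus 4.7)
The plan is to prove both conclusions simultaneously by contrapositive, leveraging the hereditary property in Proposition~\ref{p:WA-hered} together with the three ``base cases'' for which cyclic amenability of the Fourier algebra has already been ruled out: $\SU_2(\Cplx)$ and $\SO_3(\Real)$ (via the construction revisited in Section~\ref{s:su2}, which yields a non-zero cyclic derivation) and the real $ax+b$ group (via Theorem~\ref{t:ax+b_non-WA}).

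For the first conclusion, suppose for contradiction that $G$ is not solvable. Then the immediately preceding proposition produces a closed subgroup $H\leq G$ isomorphic to one of $\SU_2(\Cplx)$, $\SO_3(\Real)$, or the $ax+b$ group. In each of the three possibilities, $\FA(H)$ is known to admit a non-zero, bounded, cyclic derivation into its dual, and hence fails to be cyclically amenable. Applying Proposition~\ref{p:WA-hered} to the closed inclusion $H\hookrightarrow G$ then forces $\FA(G)$ to fail cyclic amenability as well, contradicting the hypothesis.

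For the second conclusion, the argument is even more direct: if $G$ contained a closed subgroup $H\iso \text{$ax+b$}$, then Theorem~\ref{t:ax+b_non-WA} tells us $\FA(H)$ is not cyclically amenable, and Proposition~\ref{p:WA-hered} again transfers this failure to~$G$.

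There is no real obstacle here: the theorem is essentially a packaging of the structural results of this section with the failure-of-cyclic-amenability results established earlier. The only minor subtlety worth flagging is that Proposition~\ref{p:WA-hered} was stated in the excerpt for weak amenability, with a parallel remark that the same pullback argument via the restriction homomorphism $\imath^*:\FA(G)\to\FA(H)$ preserves cyclicity of derivations; this is what allows us to invoke it for the cyclic version in both steps of the argument.
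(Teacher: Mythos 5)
Your proposal is correct and matches the paper's (very brief) argument exactly: the theorem is obtained by combining the preceding proposition on $1$-connected non-solvable Lie groups with Proposition~\ref{p:WA-hered} and the known non-cyclic-amenability of the three base cases. The only tiny inaccuracy is your closing remark: Proposition~\ref{p:WA-hered} as stated in the paper already contains the cyclic-amenability clause explicitly, so no separate ``parallel remark'' needs to be invoked.
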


It is natural to wonder what can be said for connected nilpotent Lie groups. The most obvious example is the real Heisenberg group, which lies outside the reach of the present paper. However, our techniques can be applied to a certain natural quotient of the Heisenberg group, and this will be the subject of the next section.
\end{section}


\section{The reduced Heisenberg group}\label{s:reduced-H}
We define the \dt{reduced Heisenberg group} $\bbH_r$ to be the set $\{(p,q,e^{2\pi i \theta}):p,q\in \Real,\theta\in[0,1)\}$, with group multiplication defined as 
\begin{equation}
(p,q,e^{2\pi i \theta})\cdot(p',q',e^{2\pi i \theta'})=(p+p',q+q',e^{2\pi i (\theta+\theta')}e^{\pi i (pq'-qp')})
\end{equation}
Note for later reference that $(p,q,e^{2\pi i\theta})^{-1}= (-p,-q,e^{-2\pi i\theta})$.
$\bbH_r$ is a $2$-step nilpotent Lie group, with centre $Z(\bbH_r) = \{(0,0,z)\colon z\in\Torus\}$.
Haar measure is easily described: give $\Torus$ the normalized Haar measure $d\theta$, and $\Real$ usual Lebesgue measure $dp$. 
Then the left Haar measure $\mu$ of $\bbH_r$ is defined by $d\mu(p,q,e^{2\pi i \theta})=dp\,dq\,d\theta$, which is a right Haar measure as well.

\begin{dfn}
Let $n\in \Zahl\setminus\{0\}$. The \dt{Schr\"odinger representation} $\sch_n:\bbH_r\rightarrow \cU(L^2(\Real))$ is defined by
\[ \sch_n(p,q,e^{2\pi i \theta})\xi(x)=e^{2\pi i nq(-x+\frac{p}{2})}e^{2\pi i n\theta}\xi(-p+x).\]
\end{dfn}

It is known that every $\sch_n$ is irreducible; this follows from an application of the Mackey machine if we identify $\bbH_r$ with a semidirect product $(\Real\times\Torus)\rtimes\Real$ in a suitable way. We will also see shortly (Proposition~\ref{p:red-heisenberg-sq-integ}) that $\sch_n$ is also square-integrable.

Since $\bbH_r$ is not an AR-group, we cannot decompose $\FA(\bbH_r)$ into a direct sum of coefficient spaces of irreducible representations. However, it turns out that a sufficiently large part of $\FA(\bbH_r)$ can be decomposed in this way: namely, the part generated by coefficient functions associated to the Schr\"odinger representations. This will be enough for us to carry out the same approach as in Section~\ref{ss:ax+b_orth}.

\subsection{Coefficient functions associated to the Schr\"odinger representations}
Let $n\in\Zahl\setminus\{0\}$. Given $\xi,\eta\in L^2(\Real)$, we note for later reference that
$\overline{\xi*_{\sch_n}\eta} = \overline{\xi}*_{\sch_{-n}}\overline{\eta}$.
We also have the following useful identity:
\begin{equation}\label{eq:sch-coeff-using-FT}
\begin{aligned}
(\xi *_{\sch_n} \eta )(p,q,e^{2\pi i \theta})
& = \int_{\Real} e^{2\pi i nq(-x+\frac{p}{2})}e^{2\pi i n \theta}\xi(-p+x)\overline{\eta(x)}dx \\
& = e^{2\pi i n\theta}\cF(_{-\frac{p}{2}}\xi\ _{\frac{p}{2}}\overline{\eta})(nq), 
\end{aligned}
\end{equation}
where $(_a\xi)(t)\defeq \xi(t-a)$, and $\cF$ denotes the Fourier transform (with the same normalization used in Section~\ref{s:ax+b}).

\begin{prop}\label{p:red-heisenberg-sq-integ}
Let $n$ be a nonzero integer, and let $\xi,\eta\in C_c^\infty(\Real)$. Then $\xi *_{\sch_n} \eta \in L^2(\bbH_r)$, with
\begin{equation}\label{eq:two-norm-of-coeff}
\norm{ \xi *_{\sch_n} \eta }_2^2 =\frac{1}{|n|}\norm{\xi}_2^2\norm{\eta}_2^2\,.
\end{equation}
In particular, the representation $\sch_n$ is square-integrable.
\end{prop}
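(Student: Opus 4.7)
The plan is to verify the identity by direct calculation, starting from the Fourier-transform representation of coefficient functions in \eqref{eq:sch-coeff-using-FT}.

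First I would note that, by \eqref{eq:sch-coeff-using-FT},
\[ \abs{(\xi *_{\sch_n} \eta)(p,q,e^{2\pi i\theta})}^2 = \abs{\cF(g_p)(nq)}^2, \]
where $g_p(x) \defeq ({}_{-p/2}\xi)(x)\,({}_{p/2}\overline{\eta})(x) = \xi(x+p/2)\,\overline{\eta(x-p/2)}$. In particular the integrand is independent of $\theta$, so integration against Haar measure on $\bbH_r$ reduces to a double integral over $(p,q)\in\Real^2$.

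Next I would integrate in $q$ for fixed $p$: substituting $s = nq$ introduces the factor $\abs{n}^{-1}$, and Plancherel's theorem on $\Real$ then gives
\[ \int_{\Real} \abs{\cF(g_p)(nq)}^2\,dq = \frac{1}{\abs{n}}\int_{\Real} \abs{\cF(g_p)(s)}^2\,ds = \frac{1}{\abs{n}}\norm{g_p}_{L^2(\Real)}^2. \]
Since $\xi,\eta\in C_c^\infty(\Real)$, the function $g_p$ is in $L^2(\Real)$ and all applications of Plancherel are legitimate.

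The final step is to integrate the resulting expression in $p$ and apply the linear change of variables $(x,p) \mapsto (u,v) = (x+p/2,\,x-p/2)$, which has Jacobian determinant $-1$ and hence preserves Lebesgue measure on $\Real^2$. This yields
\[ \int_\Real \int_\Real \abs{\xi(x+p/2)}^2\abs{\eta(x-p/2)}^2\,dx\,dp = \int_\Real\int_\Real \abs{\xi(u)}^2\abs{\eta(v)}^2\,du\,dv = \norm{\xi}_2^2\norm{\eta}_2^2, \]
which combined with the previous step gives the claimed formula \eqref{eq:two-norm-of-coeff}. Square-integrability of $\sch_n$ then follows immediately: taking any nonzero $\xi,\eta\in C_c^\infty(\Real)$ produces a nonzero square-integrable coefficient function. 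There is no real obstacle in this proof; the only point requiring care is tracking the correct translations in $g_p$ and verifying that the change of variables has unit Jacobian, which is what produces the clean identity rather than a stray constant.
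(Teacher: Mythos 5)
Your proof is correct and follows essentially the same route as the paper: rewrite the coefficient function via the Fourier transform as in \eqref{eq:sch-coeff-using-FT}, use the substitution $s=nq$ together with Plancherel to produce the factor $1/\abs{n}$, and then integrate out $p$ (the paper uses a translation in $q$ followed by integration in $p$ rather than your single $2\times 2$ change of variables, but these are the same computation). No gaps.
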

\begin{proof}
Fix $p\in\Real$. Since $\cF:L^2(\Real)\to L^2(\Real)$ is a unitary isomorphism,
%
\[ \begin{aligned}
\int_{\Real}|\cF(_{-\frac{p}{2}}\xi\ _{\frac{p}{2}}\overline{\eta})(nq)|^2 \,dq
&= \frac{1}{|n|}\int_{\Real}|\cF(_{-\frac{p}{2}}\xi\ _{\frac{p}{2}}\overline{\eta})(q)|^2 \,dq\\
&= \frac{1}{|n|}\int_{\Real}|\xi(q-\frac{p}{2})|^2|{\eta}(q+\frac{p}{2})|^2\, dq\\
&= \frac{1}{|n|}\int_{\Real}|\xi(q)|^2|{\eta}(q+p)|^2 \, dq<\infty.\\
\end{aligned} \]
Hence
\[ \begin{aligned}
\int_\Real\int_\Real\int_{[0,1)} |e^{2\pi i n\theta}\cF(_{-\frac{p}{2}}\xi\ _{\frac{p}{2}}\overline{\eta})(nq)|^2 \, d\theta\, dq\, dp\,
& = 
\frac{1}{|n|}\int_\Real\int_\Real\int_{[0,1)}|\xi(q)|^2|{\eta}(q+p)|^2  \, d\theta\, dq\, dp \\
& = \frac{1}{|n|}\int_\Real\int_\Real|\xi(q)|^2|{\eta}(q+p)|^2 \, dq\, dp \\
& =\frac{1}{|n|}\norm{\xi}_2^2\norm{\eta}_2^2 \,.
\end{aligned} \]
Putting these together yields Equation~\eqref{eq:two-norm-of-coeff} as required.
\end{proof}

\begin{prop}[Explicit orthogonality relations]\label{p:orthog-rel}
Let  $\xi_1,\xi_2,\eta_1,\eta_2\in \ccinf(\Real)$, and $m,n\in \Zahl\setminus\{0\}$. Then 
\begin{eqnarray*}
\pair{  \xi_1*_{\pi_n}\eta_1 }{ \xi_2*_{\pi_n}\eta_2 }_{L^2(\bbH_r)}
 & = & \frac{1}{|n|}\pair{ \eta_2 }{ \eta_1 }_{L^2(\Real)}\pair{  \xi_1 }{ \xi_2 }_{L^2(\Real)}.\\
\pair{  \xi_1*_{\pi_n}\eta_1 }{ \xi_2*_{\pi_m}\eta_2 }_{L^2(\bbH_r)}
 & = & 0, \ \text{ whenever $n\neq m$}.
\end{eqnarray*}
\end{prop}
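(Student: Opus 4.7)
The plan is to proceed in close analogy with the proof of Proposition~\ref{p:orthogonality-ax+b} for the $ax+b$ group, but using the Fourier-transform expression \eqref{eq:sch-coeff-using-FT} for coefficient functions of the Schr\"odinger representations. The integrability of all three-variable integrals in sight is guaranteed by Proposition~\ref{p:red-heisenberg-sq-integ}, so Fubini's theorem may be applied freely throughout.

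The first step is to substitute \eqref{eq:sch-coeff-using-FT} into both sides of the required inner products and integrate in $\theta$ before $p$ or $q$. Because $\int_0^1 e^{2\pi i(n-m)\theta}\, d\theta$ equals $1$ when $n=m$ and vanishes otherwise, the integral in $\theta$ immediately yields the second (cross-$n$) orthogonality identity, and reduces the first identity to an integral over $\Real^2$ of the form
\[
\int_\Real\!\!\int_\Real \cF\bigl({}_{-p/2}\xi_1\cdot {}_{p/2}\overline{\eta_1}\bigr)(nq)\,\overline{\cF\bigl({}_{-p/2}\xi_2\cdot {}_{p/2}\overline{\eta_2}\bigr)(nq)}\, dq\, dp.
\]

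The second step is a linear change of variable $q\mapsto q/n$ on the inner integral, which introduces the factor $1/|n|$ (the absolute value is crucial since $n$ may be negative), followed by an application of the Plancherel theorem on $\Real$ to transfer $\cF$ off both factors. This leaves
\[
\frac{1}{|n|}\int_\Real\!\!\int_\Real \xi_1\!\left(y+\tfrac{p}{2}\right)\overline{\xi_2\!\left(y+\tfrac{p}{2}\right)}\,\overline{\eta_1\!\left(y-\tfrac{p}{2}\right)}\eta_2\!\left(y-\tfrac{p}{2}\right) dy\, dp.
\]
Finally, the invertible affine change of variables $u=y+p/2$, $v=y-p/2$ has Jacobian determinant $\pm 1$ and decouples the double integral into a product $\langle\xi_1,\xi_2\rangle_{L^2(\Real)}\langle\eta_2,\eta_1\rangle_{L^2(\Real)}$, yielding the claimed identity.

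I do not expect any substantial obstacle: the only things to watch are the normalization of $\cF$ (the paper uses the unitary convention, so Plancherel costs no constant), keeping $|n|$ rather than $n$ in the change of variable, and making sure the two complex conjugations inside the inner product pair up with $\xi_2$ and $\eta_1$ respectively so that the final inner products appear in the correct order. Since $C_c^\infty(\Real)$ functions decay rapidly and the integrand is bounded on compact sets, every use of Fubini is automatically justified, so no delicate analytic issue arises.
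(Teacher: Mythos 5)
Your argument is correct, and it follows essentially the same route as the paper: the cross-term vanishing comes from integrating $e^{2\pi i(n-m)\theta}$ over $\Torus$, and the diagonal case reduces via formula \eqref{eq:sch-coeff-using-FT}, Plancherel on $\Real$, and the unimodular change of variables $u=y+\tfrac{p}{2}$, $v=y-\tfrac{p}{2}$ --- which is precisely the computation already carried out in the proof of Proposition~\ref{p:red-heisenberg-sq-integ}. The only difference is that the paper avoids redoing that computation with four functions by simply polarizing the norm identity \eqref{eq:two-norm-of-coeff} (in $\xi$ and $\eta$ separately), whereas you recompute directly; both are valid and your bookkeeping of the conjugations, the factor $1/|n|$, and the Fubini justifications is accurate.
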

\begin{proof}
The first identity follows from Equation~\eqref{eq:two-norm-of-coeff} and polarization.
The second identity is immediate from the explicit formula in Equation~\eqref{eq:sch-coeff-using-FT} and the orthogonality in $L^2(\Torus)$ of the trigonometric monomials.
\end{proof}

Let $\FA_\sch(\bbH_r)$ denote the closed subspace of $\FS(\bbH_r)$ generated by the subspaces $\FA_{\sch_n}(\bbH_r)$ for all $n\in\Zahl\setminus\{0\}$.
It follows from Proposition~\ref{p:red-heisenberg-sq-integ} that $\bigoplus_{0\neq n\in \Zahl}\sch_n$ is a subrepresentation of the left regular representation $\lambda$ of $\bbH_r$, and so $\FA_\sch(\bbH_r)\subseteq \FA(\bbH_r)$.
Moreover, by Corollary 3.13 and Theorem 3.18 of \cite{Arsac} (see also \cite[Proposition 3.5.18]{Zwarich_MSc}), we can write $\FA_\sch(\bbH_r)$ as an $\ell^1$-direct sum $\bigoplus_{0\neq n\in \Zahl}\FA_{\sch_n}(\bbH_r)$.

There is an obvious quotient homomorphism of topological groups $Q: \bbH_r\to \Real^2$, defined by $Q(p,q,e^{2\pi i\theta}) = (p,q)$. Let $\lambda_0 = \lambda_{\Real^2}\circ Q: \bbH_r\to \cU(L^2(\Real^2))$.
A direct calculation shows that if $\xi,\eta\in C_c(\Real^2)$ then
$\xi*_{\lambda_0}\eta\in C_c(\bbH_r)$, and so $\lambda_0$ is square-integrable.
Thus $\FA_{\lambda_0}(\bbH_r)$ is a closed subspace of $\FA(\bbH_r)$.

\begin{prop}\label{p:A(Hr)-decomp}
There is an $\ell^1$-direct sum decomposition
$\FA(\bbH_r)= \FA_{\sch}(\bbH_r) \oplus_1 \FA_{\lambda_0}(\bbH_r)$.
\end{prop}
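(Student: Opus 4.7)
The plan is to decompose the left regular representation $\lambda$ of $\bbH_r$ into pieces whose coefficient functions generate $\FA_{\lambda_0}(\bbH_r)$ and $\FA_\sch(\bbH_r)$ respectively, and then invoke the same results on Fourier algebras of direct sums of disjoint representations that were cited just above to decompose $\FA_\sch(\bbH_r)$.

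First I would fibre $L^2(\bbH_r)$ over the dual of the compact central subgroup $Z = Z(\bbH_r) \iso \Torus$. Since Haar measure on $\bbH_r$ is the product $dp\,dq\,d\theta$, Plancherel on $\Torus$ gives the orthogonal direct sum
\[ L^2(\bbH_r) \,=\, \bigoplus_{n \in \Zahl} L^2_n, \qquad L^2_n \defeq \bigl\{f \in L^2(\bbH_r) : f(p,q,e^{2\pi i\theta}) = e^{2\pi i n\theta} g(p,q),\; g \in L^2(\Real^2)\bigr\}. \]
A direct computation using the multiplication rule in $\bbH_r$ together with centrality of $Z$ shows each $L^2_n$ is $\lambda$-invariant. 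For $n=0$, the action of $\lambda$ is trivial on the $\Torus$-factor, so the obvious identification $L^2_0 \iso L^2(\Real^2)$ exhibits $\lambda|_{L^2_0}$ as $\lambda_0$.

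For $n \neq 0$ I would identify $\lambda|_{L^2_n}$ as a countable multiple of $\sch_n$. By \eqref{eq:sch-coeff-using-FT}, every coefficient function of $\sch_n$ belongs to $L^2_n$; by Proposition~\ref{p:red-heisenberg-sq-integ}, for each nonzero $\eta \in L^2(\Real)$ the map $V_\eta \colon \xi \mapsto \xi *_{\sch_n} \eta$ is a nonzero scalar multiple of an isometric $\bbH_r$-intertwiner from $\sch_n$ into $L^2_n$. The closed linear span $M_n$ of $\bigcup_{\eta} V_\eta(L^2(\Real))$ is therefore a $\lambda$-invariant subspace of $L^2_n$ on which $\lambda$ is a multiple of $\sch_n$; a direct density argument, using \eqref{eq:sch-coeff-using-FT} and the (standard) fact that the ``ambiguity-type'' functions $(p,q) \mapsto \cF({}_{-p/2}\xi \cdot {}_{p/2}\overline{\eta})(nq)$ span a dense subspace of $L^2(\Real^2)$ as $\xi,\eta$ vary in $L^2(\Real)$, then upgrades $M_n = L^2_n$.

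Having decomposed $\lambda \iso \lambda_0 \oplus \bigoplus_{n \neq 0} \lambda|_{L^2_n}$ as a direct sum of mutually disjoint representations---the irreducible subrepresentations of $\lambda_0$ are one-dimensional characters pulled back via $Q$, while the $\sch_n$ are pairwise inequivalent infinite-dimensional irreducibles---\cite[Cor.~3.13 and Thm.~3.18]{Arsac} applies to give
\[ \FA(\bbH_r) \,=\, \FA_\lambda \,=\, \FA_{\lambda_0}(\bbH_r) \oplus_1 \bigoplus_{0 \neq n \in \Zahl} \FA_{\sch_n}(\bbH_r) \,=\, \FA_{\lambda_0}(\bbH_r) \oplus_1 \FA_\sch(\bbH_r), \]
where along the way I use that $\FA_\pi$ depends only on the unitary equivalence class of $\pi$, so that $\FA_{\lambda|_{L^2_n}} = \FA_{\sch_n}$. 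The main obstacle is the identification $\lambda|_{L^2_n} \iso$ (multiple of $\sch_n$): while this is essentially standard square-integrable representation theory, one must verify carefully that the $\sch_n$-coefficient functions generate \emph{all} of $L^2_n$ rather than only a proper subrepresentation, which is exactly what the density argument above is designed to handle.
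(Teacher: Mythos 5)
Your argument is essentially the referee's proof given in the appendix: the same fibration of $L^2(\bbH_r)$ over $\widehat{\Torus}$ into $\lambda$-invariant subspaces $L^2_n$, the same identification of the $n=0$ fibre with $\lambda_0$, and the same final appeal to Arsac's results for direct sums of pairwise disjoint representations. The one real difference is how the pieces with $n\neq 0$ are identified: the paper observes that the centre acts on $L^2_n$ by the character $z\mapsto z^{-n}$ and invokes the Stone--von Neumann theorem, whereas you propose a hands-on argument via coefficient-function intertwiners together with density of the ambiguity functions in $L^2(\Real^2)$. Both work; Stone--von Neumann is shorter, while your route is more self-contained given that Proposition~\ref{p:red-heisenberg-sq-integ} already contains the Moyal identity --- but you would then owe a proof of the density claim you label as standard (e.g.\ completeness of the special Hermite functions, or the usual kernel argument). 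One slip needs repairing: with the paper's convention $\xi*_\pi\eta(g)=\pair{\pi(g)\xi}{\eta}$, left translation acts on the \emph{second} vector, $\lambda(x)(\xi*_{\sch_n}\eta)=\xi*_{\sch_n}(\sch_n(x)\eta)$, so your map $V_\eta\colon\xi\mapsto\xi*_{\sch_n}\eta$ intertwines $\sch_n$ with the \emph{right} regular representation, not with $\lambda$; the correct $\lambda$-intertwiner is the conjugate-linear map $\eta\mapsto\xi*_{\sch_n}\eta$, which exhibits $\lambda|_{L^2_n}$ as a multiple of $\overline{\sch_n}\simeq\sch_{-n}$ (as in the paper), not of $\sch_n$. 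This is harmless for the conclusion, since $n\mapsto-n$ permutes $\Zahl\setminus\{0\}$ and $\FA_\sch(\bbH_r)$ is built from all nonzero $n$, but the intertwining claim as written is false. (Likewise, pairwise disjointness of $\lambda_0$ and the $\sch_n$ is most cleanly seen from their distinct central characters; $\lambda_0$ has no irreducible subrepresentations, being quasi-equivalent to a direct integral of characters of $\Real^2$.)
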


Although this decomposition appears to be known to specialists, we were unable to locate an explicit statement of it in the literature.
 The original version of this paper included a proof of Proposition~\ref{p:A(Hr)-decomp} similar to that of Proposition~\ref{p:A(ax+b)-decomp}. We would like to thank the referee for suggesting a simpler proof, which can be found in the appendix.

\subsection{Constructing a cyclic derivation on $\FA(\bbH_r)$}
Our approach is very similar to the one we used for the real $ax+b$ group, and once again we  use Lemma \ref{l:fussy-extension}. Let
\[ V = \lin \{\xi*_{\sch_n}\eta:\xi,\eta\in C_c^\infty(\Real),0\neq n\in\Zahl\}
   + {\lin\{ \xi*_{\lambda_0} \eta \colon  \xi,\eta\in C_c(\Real^2) \} }\,, \]
and let $B$ denote the algebra generated by $V$.
 Note that $V$ is dense in $\FA(\bbH_r)$.
For $f\in C^1(\bbH_r)$ we define
\[ \partial_\theta f \defeq \frac{1}{2\pi i} \frac{\partial f}{\partial \theta} \]
Note that $\partial_\theta$ vanishes on all coefficient functions of $\lambda_0$, since they are constant on cosets of the closed subgroup ${\mathcal Z}\defeq \{(0,0,z):\ z\in \Torus\}$. (In fact, by \cite[Proposition 3.25]{eymard64}, every $f$ in $\FA(\bbH_r)$ which is constant on cosets of ${\mathcal Z}$ belongs to $\FA_{\lambda_0}(\bbH_r)$, although we do not need this in what follows.)

Now define $D_\flat: B\times B\rightarrow \Cplx$ by
\begin{equation}\label{eq:H-r-derivation}
D_\flat(f,g)=\int_{\Real}\int_{\Real}\int_{\Torus}\partial_\theta f(x,y,e^{2\pi i \theta})g(x,y,e^{2\pi i \theta})\, dx\, dy\, d\theta.
\end{equation}
The following lemma ensures that $D_\flat$ is well-defined.
\begin{lem}
$\partial_\theta(B)\subseteq B\subseteq (\FA\cap L^1)(\bbH_r)$.
\end{lem}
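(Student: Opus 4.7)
The plan is to split the lemma into two parts, and treat the two chained inclusions separately.

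For the inclusion $\partial_\theta(B) \subseteq B$, I would first analyze how $\partial_\theta$ acts on the two families of generators of $V$. Inspecting the definition of $\sch_n$ (or the explicit formula in~\eqref{eq:sch-coeff-using-FT}), the dependence of $\xi*_{\sch_n}\eta$ on $\theta$ is just a factor $e^{2\pi i n\theta}$, so $\partial_\theta(\xi*_{\sch_n}\eta) = n\,(\xi*_{\sch_n}\eta)$. On the other hand, coefficient functions of $\lambda_0$ factor through $Q:\bbH_r\to\Real^2$, hence are constant on cosets of $\cZ$ and are annihilated by $\partial_\theta$. Therefore $\partial_\theta(V)\subseteq V$. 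A single application of the Leibniz rule then shows $\partial_\theta(V\cdot V)\subseteq V\cdot V + V\cdot V$, and by induction $\partial_\theta$ maps a product of $k$ elements of $V$ into a sum of such products, each of which lies in $B$. So $\partial_\theta(B)\subseteq B$.

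For the inclusion $B\subseteq (\FA\cap L^1)(\bbH_r)$, I would first verify $V\subseteq (\FA\cap L^1)(\bbH_r)$ generator by generator. For $f=\xi*_{\sch_n}\eta$ with $\xi,\eta\in \ccinf(\Real)$, formula~\eqref{eq:sch-coeff-using-FT} expresses $f(p,q,e^{2\pi i\theta}) = e^{2\pi i n\theta}\cF(g_p)(nq)$ where $g_p(x)=\xi(x+p/2)\overline{\eta(x-p/2)}\in \ccinf(\Real)$. The $p$-support of $g_p$ is bounded, and for each such $p$ the function $g_p$ is smooth with uniformly bounded Schwartz seminorms; so $\|\cF(g_p)\|_{L^1(\Real)}$ is bounded uniformly in $p$ (by estimating $(1+t^2)\cF(g_p)$ via the $L^1$-norm of $(1 - (2\pi)^{-2}\partial^2)g_p$, for example). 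Integrating $|f|$ over $\bbH_r$ gives a finite number. For $\xi*_{\lambda_0}\eta$ with $\xi,\eta\in C_c(\Real^2)$, the function factors through $Q$ as an element of $C_c(\Real^2)$, which is trivially integrable against Haar measure on $\bbH_r$ once one integrates out the $\Torus$ factor.

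To pass from $V$ to $B$, I would observe that $(\FA\cap L^1)(\bbH_r)$ is in fact an ideal of $\FA(\bbH_r)$: if $f\in (\FA\cap L^1)(\bbH_r)$ and $g\in\FA(\bbH_r)$, then $g$ is bounded (since $\FA(\bbH_r)\subseteq C_0(\bbH_r)$ with $\|g\|_\infty\leq \|g\|_{\FA}$), so $fg\in L^1(\bbH_r)$, while $fg\in\FA(\bbH_r)$ because $\FA(\bbH_r)$ is an algebra. Consequently, the (non-closed) algebra $B$ generated by $V\subseteq (\FA\cap L^1)(\bbH_r)$ lies inside this ideal, which gives the second inclusion.

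The only step requiring genuine care is the uniform $L^1$-Fourier bound for $g_p$ in the Schrödinger case; the rest is formal bookkeeping using the Leibniz rule and the standard fact that $\FA(\bbH_r)\subseteq C_0(\bbH_r)$.
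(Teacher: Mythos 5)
Your argument is correct and follows essentially the same route as the paper: reduce the first inclusion to generators via the Leibniz rule and the computation $\partial_\theta(\xi*_{\sch_n}\eta)=n\,(\xi*_{\sch_n}\eta)$ together with the vanishing of $\partial_\theta$ on coefficients of $\lambda_0$, and obtain the second inclusion from $V\subseteq(\FA\cap L^1)(\bbH_r)$ plus the fact that $(\FA\cap L^1)(\bbH_r)$ is closed under multiplication. The only difference is that you spell out the uniform $L^1$-Fourier bound behind the containment $V\subseteq(\FA\cap L^1)(\bbH_r)$, which the paper simply asserts with ``observe that''; this is a welcome elaboration, not a change of method.
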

\begin{proof}
To show the first inclusion, it is enough (by the product rule) to show that $\partial_\theta(V)\subseteq V$. Since $\partial_\theta$ vanishes on $\FA_{\lambda_0}(\bbH_r)$ it suffices to prove that $\partial_\theta(\xi*_{\sch_n}\eta)\in V$ for every nonzero integer $n$ and every $\xi,\eta\in C_c^\infty(\Real)$.
This is straightforward, since by \eqref{eq:sch-coeff-using-FT},
\[
\begin{aligned}
\partial_\theta(\xi*_{\sch_n}\eta)(x,y,e^{2\pi i \theta})
 & = \partial_\theta(e^{2\pi i n\theta}\cF(_{-\frac{p}{2}}\xi\ _{\frac{p}{2}}\overline{\eta})(nq)) \\
 & = n e^{2\pi i n\theta}\cF(_{-\frac{p}{2}}\xi\ _{\frac{p}{2}}\overline{\eta})(nq) 
 & = n(\xi*_{\sch_n}\eta)(x,y,e^{2\pi i\theta})
\end{aligned}
\]
For the second inclusion, observe that $V\subseteq (\FA\cap L^1)(\bbH_r)$. Therefore, since $(\FA\cap L^1)(\bbH_r)$ is a subalgebra of $\FA(\bbH_r)$, it contains the algebra generated by $V$, which is~$B$.
\end{proof}

\begin{thm}\label{t:reduced-H_non-WA}
There is a continuous extension of $D_\flat$ to a non-zero, bounded, cyclic derivation $D:\FA(\bbH_r)\rightarrow \FA(\bbH_r)^*$. In particular, $\FA(\bbH_r)$ is not cyclically amenable, hence is not weakly amenable. 
\end{thm}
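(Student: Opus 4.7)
The plan is to apply Lemma~\ref{l:fussy-extension} with $A = \FA(\bbH_r)$, the subspace $V$ and the subalgebra $B$ defined just before the theorem. We have already verified that $V$ is dense in $\FA(\bbH_r)$ and that $\partial_\theta(B) \subseteq B \subseteq (\FA\cap L^1)(\bbH_r)$, so $D_\flat$ is well-defined on $B\times B$. The Leibniz-type identity (condition~\ref{li:leibniz} of the lemma) is an immediate consequence of the Leibniz rule for $\partial_\theta$ on $C^1(\bbH_r)$ together with commutativity of pointwise multiplication, exactly as in the proof of Theorem~\ref{t:ax+b_non-WA}. Integration by parts over the torus variable shows that $D_\flat$ is antisymmetric on $B\times B$; this immediately yields cyclicity of the eventual extension $D$, and it reduces condition~\ref{li:bounded1} of the lemma to the estimate $|D_\flat(w,f)| = |\int \partial_\theta w \cdot f\,d\mu| \le \|\partial_\theta w\|_1 \|f\|_\infty \le \|\partial_\theta w\|_1 \|f\|_{\FA}$, valid for $w\in V$ and $f\in B$ since $\FA(\bbH_r) \subseteq C_0(\bbH_r)$.

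The main work, and the place where the argument is most delicate, is the norm estimate~\ref{li:bounded2}: that $|D_\flat(v,w)| \le \|v\|_{\FA}\|w\|_{\FA}$ for $v,w \in V$. The strategy mirrors Proposition~\ref{p:ax+b_mainbound}. By the $\ell^1$-direct sum decomposition of Proposition~\ref{p:A(Hr)-decomp}, any $v\in V$ may be written uniquely as $v = v_0 + \sum_{n\ne 0} v_n$ with $v_0 \in \FA_{\lambda_0}$, $v_n \in \FA_{\sch_n}$, and finite support in $n$; similarly for $w$. Since $\partial_\theta$ annihilates $\FA_{\lambda_0}$ and acts as multiplication by $n$ on $\FA_{\sch_n}$ (by the computation in the preceding lemma), one has $D_\flat(v,w) = \sum_{n\ne 0} n\, \langle v_n, \overline{w_{-n}}\rangle_{L^2(\bbH_r)}$, where all the ``non-matching'' cross terms vanish. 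The key point is that the $L^2$-pairing between a $\sch_n$-coefficient ($n\ne 0$) and a $\lambda_0$-coefficient is zero: integrating over the center $\mathcal Z \iso \Torus$ reduces to the vanishing of $\int_\Torus e^{2\pi i n\theta}\,d\theta$. The remaining cross terms vanish by the second identity in Proposition~\ref{p:orthog-rel}, and each diagonal term is controlled by the first identity there: using the isometric identification $\cH\ptp\overline{\cH} \iso \FA_{\sch_n}$ from Remark~\ref{r:A_pi_of_irred}, the factor $\frac{1}{|n|}$ in the orthogonality relation cancels the factor $n$ coming from $\partial_\theta$, leaving $|n \langle v_n, \overline{w_{-n}}\rangle_{L^2}| \le \|v_n\|_{\FA}\|w_{-n}\|_{\FA}$. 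Summing and invoking the $\ell^1$-decomposition gives the desired bound with constant $1$.

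Lemma~\ref{l:fussy-extension} then produces a bounded derivation $D:\FA(\bbH_r)\to\FA(\bbH_r)^*$ extending $D_\flat$. Cyclicity of $D$ follows by continuity and density from the antisymmetry of $D_\flat$ established in the first paragraph. For non-triviality, take any nonzero $\xi\in C_c^\infty(\Real)$, set $f = \xi*_{\sch_1}\xi \in V$ and $g = \overline{f} = \overline{\xi}*_{\sch_{-1}}\overline{\xi}$; then $\partial_\theta f = f$ and the computation $D_\flat(f,g) = \langle f, f\rangle_{L^2(\bbH_r)} = \|f\|_2^2 = \|\xi\|_2^4$ (using~\eqref{eq:two-norm-of-coeff} with $n=1$) is strictly positive.

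The main obstacle is the key estimate in the second paragraph: it hinges on two compatibilities built into the construction, namely that the factor of $n$ produced by $\partial_\theta$ is absorbed by the normalization $1/|n|$ in the orthogonality relations, and that the central-character decomposition of $\bbH_r$ forces $L^2$-orthogonality between $\FA_{\sch_n}$ and $\FA_{\lambda_0}$. Once these features are recognized, the rest of the argument proceeds by the same pattern as in the $ax+b$ case.
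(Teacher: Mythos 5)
Your proposal is correct and follows essentially the same route as the paper: the same application of Lemma~\ref{l:fussy-extension}, the same integration-by-parts/antisymmetry argument giving condition~\ref{li:bounded1} and cyclicity, and the same key estimate in which the factor $n$ produced by $\partial_\theta$ is cancelled by the $1/|n|$ in the orthogonality relations of Proposition~\ref{p:orthog-rel}, after which one sums using the $\ell^1$-decomposition of Proposition~\ref{p:A(Hr)-decomp}. Even the non-triviality witness $\xi*_{\sch_1}\xi$ paired against its conjugate matches the paper's choice.
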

\begin{proof}
To reduce notational clutter, we denote the norm on $\FA(\bbH_r)$ by $\norm{\cdot}_{\FA}$.

We will apply Lemma \ref{l:fussy-extension} to $A=\FA(\bbH_r)$, and $V$ and $B$ defined as above. It is easy to see that, because of its definition, $D_\flat$ satisfies the Leibniz identity on $B$.
Also, $D_\flat$ is nonzero, since
\[ D_\flat(\xi*_{\sch_n}\eta\, ,\, \overline{\xi}*_{\sch_{-n}}\overline{\eta})=D_\flat(\xi*_{\sch_n}\eta\, ,\, \overline{\xi*_{\sch_n}\eta})=\norm{\xi}^2\norm{\eta}^2. \]

We may check that condition \ref{li:bounded1} of  Lemma \ref{l:fussy-extension} is satisfied, by an argument very similar to the one used for the $ax+b$ group (Theorem~\ref{t:ax+b_non-WA}).
Integrating by parts, we see that $D_\flat$ is an antisymmetric bilinear map. Moreover, since $\partial_\theta(B)\subseteq L^1(\bbH_r)$, $D_\flat$ is $\norm{\cdot}_{\FA}$-continuous in the second variable. Therefore by antisymmetry it is also $\norm{\cdot}_{\FA}$-continuous in the first variable.

To check condition \ref{li:bounded2}, first let $v=\xi*_{\sch_n}\eta$ and $w=\xi'*_{\sch_m}\eta'$.
Then
\[ \begin{aligned}
D_\flat(v,w)
 =  \int_{\bbH_r} \partial_\theta (v) w \,d\mu 
 & =  \ip{\partial_\theta(\xi*_{\sch_n}\eta) }{ \overline{\xi'*_{\sch_m}\eta'} }_{L^2(\bbH_r)} \\
 & = \ip{ n(\xi*_{\sch_n}\eta) }{ (\overline{\xi'}*_{\sch_{-m}}\overline{\eta'} )}_{L^2(\bbH_r)}.
\end{aligned} \]
Thus, $|D_\flat(\xi*_{\sch_n}\eta\, ,\, {\xi'*_{\sch_m}\eta'})|=0$  if $n\neq -m$, while in the case of $n=-m$ we get
\[\begin{aligned}
|D_\flat(v,w)|
 =|D_\flat(\xi*_{\sch_n}\eta,\xi'*_{\sch_{-n}}\eta')| 
& =|\pair{ \xi }{ \overline{\xi'}  } \pair{ \overline{\eta'} }{ \eta }| \\
& \leq  \norm{\xi}\norm{\xi'}\norm{\eta}\norm{\eta'}
= \norm{v}_{\FA}\norm{w}_{\FA}.
\end{aligned} \]
Thus condition \ref{li:bounded2} holds for coefficient functions of Schr\"odinger representations. By identifying $\FA_{\sch_n}\ptp\FA_{\sch_{-n}}$ with $L^2(\Real)\ptp\overline{L^2(\Real)}\ptp\overline{L^2(\Real)}\ptp L^2(\Real)$,
one can extend this to show that condition \ref{li:bounded2} holds when $v$ and $w$ are linear combinations of such coefficient functions: this is similar to the argument in the proof of Proposition~\ref{p:ax+b_mainbound}.
Finally, let $v=\sum_{n=-N}^N v_n$ and $w=\sum_{m=-N}^{N} w_m$,
 where
 $v_k$ and $w_k$ are finite linear combinations of coefficient functions, associated to $\sch_k$ for $k\neq 0$ and associated to $\lambda_0$ for $k=0$.
Note that $D_\flat(v_0,\blank)=D_\flat(\blank,w_0)=0$, and so
\[
\begin{aligned}
|D_\flat(v,w)|
 &=\left|\sum_{n=-N}^N\sum_{m=-N}^N D_\flat(v_n,w_m)\right|\\
 &\leq  \sum_{n=1}^N  |D_\flat(v_n,w_{-n})|+ \sum_{n=1}^N  |D_\flat(v_{-n},w_{n})|\\
 & \leq  \sum_{n=1}^N \norm{v_n}_{\FA}\norm{w_{-n}}_{\FA}+\sum_{n=1}^N \norm{v_{-n}}_{\FA}\norm{w_{n}}_{\FA}\\
 &\leq  \left(\sum_{n=-N}^N  \norm{v_n}_{\FA}\right)\left(\sum_{n=-N}^N  \norm{w_n}_{\FA}\right)
 & =   \norm{v}_{\FA}\norm{w}_{\FA},
\end{aligned}
\]
where we used the $\ell^1$-decomposition of $A(\bbH_r)$ obtained in Proposition \ref{p:A(Hr)-decomp}. Thus condition \ref{li:bounded2} holds for all $v,w\in V$, and we may apply Lemma~\ref{l:fussy-extension}, which completes the proof.
\end{proof}


\begin{section}{Closing remarks}
In~\cite{ForRun_amenAG}, B. E. Forrest and V. Runde posed the explicit conjecture that if $G$ is a locally compact group, then $\FA(G)$ is weakly amenable if and only if the connected component of the identity in $G$ is abelian. (The conjecture may have been posed informally on earlier occasions;
there is a good account of the background context, and some partial results, in Section 2 of the survey article \cite{Spronk_BA09survey}.) 
In~particular, if $G$ is a connected non-abelian Lie group, then its Fourier algebra should not be weakly amenable.
Theorems~\ref{t:reduced-H_non-WA} and \ref{t:should-be-solvable} provide further supporting evidence for this weaker conjecture. Natural examples to try next are: the motion group of the plane, $\Real^2\rtimes\SO_2(\Real)$, and its covering groups; and the ``full'' Heisenberg group~$\bbH$. These examples are Type I groups, so one can try to analyse their Fourier algebras in terms of their irreducible representations.

We finish the paper with remarks on a possible alternative approach to our results. With hindsight, our calculations for the $ax+b$ group and the reduced Heisenberg group can be interpreted in terms of the Plancherel formulas for those groups, since the key to both constructions is an estimate of expressions of the form $\pair{\partial f}{g}_{L^2(G)}$ for an appropriately defined differential operator~$\partial$.

The Plancherel formula for $\bbH_r$ takes the form
\[ \pair{f}{g}_{L^2(\bbH_r)} =
 \int_{\Real^2} {\mathbb E}_{\Torus}f\ \overline{ {\mathbb E}_{\Torus}g }
 +  \sum_{n\in\Zahl\setminus\{0\}} |n| \Tr( \sch_n(f)\sch_n(g)^* ) \]
where ${\mathbb E}_{\Torus} : L^2(\bbH_r)\to L^2(\Real^2)$ averages along the embedded copy of $\Torus$.
(We did not find this identity stated explicitly in the sources we consulted, but it can be proved by adapting the standard proofs of the Plancherel formula for the \emph{full} Heisenberg group.)
Taking this for granted, and using the same orthogonality techniques as in Section~\ref{s:reduced-H}, some work shows that
\[ \norm{f}_{\FA(\bbH_r)} = \norm{ {\mathbb E}_{\Torus} f}_{\FA(\Real^2)} + \sum_{n\in\Zahl\setminus\{0\}} |n| \norm{\sch_n(f)}_1 \,\]
in close analogy with known formulas for the Fourier algebras of \emph{compact} groups.
If $\partial_\theta$ is as defined in Section~\ref{s:reduced-H}, then by arguing as in Remark~\ref{r:use-Plancherel-for-compact}, one can show that
\[ \abs{ \int_{\bbH_r} \partial_\theta(f)g  } \leq \sum_{n\in\Zahl\setminus\{0\}} |n|^2 \norm{\sch_n(f)\sch_n(g)^* }_1 \leq \norm{f}_{\FA(\bbH_r)}  \norm{g}_{\FA(\bbH_r)} \]
provided $f$ and $g$ are sufficiently well-behaved. Extending by continuity then gives us the desired derivation.

There is also a Plancherel formula for the $ax+b$ group, but because of non-unimodularity the Fourier transform no longer defines an isometry from $L^2(G)$ to a space of Hilbert--Schmidt operators. Instead, one has to introduce a correcting factor, in the form of the unbounded operator $K$ that played a role in Section~\ref{s:ax+b}. (See \cite{eymard-terp,khalil_ax+b} for details.) Nevertheless, with sufficiently careful book-keeping, a very similar argument can be used to give an alternative proof of Theorem~\ref{t:ax+b_non-WA}.

For the two cases treated in this paper, coefficient functions seemed to provide a more direct and secure approach. On the other hand, the Plancherel perspective may be useful for examples such as the full Heisenberg group~$\bbH$, which does not have a large supply of square-integrable representations, and for which a direct analysis in terms of coefficient functions of irreducible representations seems less promising. We intend to pursue this further in forthcoming work.
\end{section}

\appendix

\begin{section}{Appendix: $\ell^1$-decompositions of two Fourier algebras}

\subsection{Proof of Proposition~\ref{p:A(ax+b)-decomp}}
We keep the same notation used in Section~\ref{s:ax+b}. In particular, we denote the connected, real $ax+b$ group by $G$.
Since $\pi_+$ and $\pi_-$ are inequivalent square-integrable unitary representations,  $\FA_{\pi_+}\oplus \FA_{\pi_-}$ is an $\ell^1$-direct sum inside $\FA(G)$. (See Proposition 3.12 and Corollary 3.13 of \cite{Arsac},
or \cite[Proposition 3.5.18]{Zwarich_MSc}.) In particular, it is a closed subspace, and so it suffices to prove that it is also a dense subspace.

\para{Step 1} Let $\cV_0 = \lin\{\xi*_{\pi_{\pm}}\eta\colon \xi,\eta\in C_c(\Real_+^*)\}$. We claim that $\cV_0$ is dense in $L^2(G)$.

\begin{proof}
Identifying $L^2(G)$ with $L^2(\Real\times\Real_+^*)$, define $W: L^2(G) \to L^2(G)$ by $W F(b,a) = F(b, |b|a)$. This is an isometric isomorphism.
Now, given $\xi,\eta\in C_c(\Real_+^*)$, and regarding $\eta$ as an element of $L^2(\Real)$, direct calculation yields
\[ \begin{aligned}
(\cF\tp I)W(\eta\tp\xi)(b,a)
 & = \int_\Real e^{-2\pi ibt} W(\eta\tp \xi)(t,a) \,dt \\
 & = \int_0^\infty e^{-2\pi ibt} \eta(t) \xi(ta) \,dt
 & = (\xi*_{\pi_+} \overline{K\eta})(b,a).
\end{aligned} \]
A similar calculation shows that if $\xi,\eta\in C_c(\Real_+^*)$, and we define $\check{\eta}(t) = \eta(-t)$, then
\[ (\cF\tp I)W(\check{\eta}\tp \xi)= - \xi *_{\pi_{-}} \overline{K\eta}\;. \]
Let $\cS_0=\lin\{ \eta\tp\xi \colon \eta \in C_c(\Real_+^*)\oplus C_c(\Real_{-}^*), \xi \in C_c(\Real_+^*) \}$.
 The calculations above show that $(\cF\tp I)W(\cS_0)\subseteq \cV_0$.
Since $\cS_0$ is dense in $L^2(G)$, and since both $W$ and $\cF\tp I$ are invertible isometries, it follows that $\cV_0$ is dense in $L^2(G)$. This completes Step~1.
\end{proof}

\para{Step 2} Define $\Psi_\lm: L^2(G)\ptp \overline{L^2(G)} \to \FS(G)$ to be the continuous linear map $f\tp \overline{g} \to f*_\lm g$. We claim that $\Psi_\lm(\cV_0\tp\overline{\cV_0})\subseteq \FA_{\pi_+}\oplus\FA_{\pi_-}$. 

\begin{proof}
It suffices to prove that if  $f$ and $g$ are coefficient functions of $\pi_+$ or $\pi_{-}$, both generated by vectors in $C_c(\Real_+^*)$, then $f*_\lm g$ is a coefficient function of $\pi_{-}$ or $\pi_+$.
This will follow from the orthogonality relations for coefficient functions of $\pi_+$ and $\pi_-$: although these identities were stated in Proposition \ref{p:orthogonality-ax+b} only for vectors in $C_c^2(\Real_+^*)$, the same calculations all work for vectors in $C_c(\Real_+^*)$.

Given a unitary representation $\pi:G\to \cU(\cH_\pi)$ and $\xi,\eta\in \cH_\pi$, we have
\[ (\xi *_{\pi}\eta)(x^{-1}y) = \ip{\pi(x)^*\pi(y)\xi}{\eta} = \ip{\pi(y)\xi}{\pi(x)\eta} = (\xi*_{\pi} \pi(x)\eta)(y)
 \qquad(x,y\in G). \]
(This calculation works for any locally compact group, not just the $ax+b$ group.) Therefore, given $\xi,\xi',\eta,\eta'\in C_c(\Real_+^*)$,  the orthogonality relations for $\pi_+$ yield
\[
\begin{aligned}
\ip{ \lambda(x)(\xi*_{\pi_+}\eta) }{ \xi'*_{\pi_+}\eta' }
 & = \ip{ \xi*_{\pi_+}\pi_+(x)\eta }{ \xi'*_{\pi_+}\eta' } \\
 & = \ip{ K^{-\frac{1}{2}}\xi }{ K^{-\frac{1}{2}}\xi'} \ \overline{ \ip{ \pi_+(x)\eta }{ \eta'} }
 & = \ip{ K^{-\frac{1}{2}}\xi }{ K^{-\frac{1}{2}}\xi'} \ (\overline{\eta}*_{\pi_{-}} \overline{\eta'}) (x)\;,
\end{aligned}
\]
so that $(\xi*_{\pi_+}\eta)*_\lm ( \xi'*_{\pi_+}\eta')$ is a scalar multiple of $\overline{\eta}*_{\pi_{-}} \overline{\eta'}$. A similar calculation shows that
$(\xi*_{\pi_-}\eta)*_\lm (\xi'*_{\pi_-}\eta')$ is a scalar multiple of $\overline{\eta}*_{\pi_+} \overline{\eta'}$, and that
$\ip{ \lambda(x)(\xi*_{\pi_+}\eta) }{ \xi'*_{\pi_-}\eta' }  = 0$.
This completes Step 2.
\end{proof}

Putting Steps 1 and 2 together, and recalling that $\Psi_\lm$ has closed range $\FA(G)$, we see that elements of $\FA(G)$ can be approximated in $\FA(G)$-norm by elements of $\cV_0$. This completes the proof of Proposition~\ref{p:A(ax+b)-decomp}.
\hfill$\Box$

\subsection{Proof of Proposition~\ref{p:A(Hr)-decomp}}
The following argument is due to the referee. It is similar to the proof of~\cite[Theorem 3.1]{CFS}.

First recall that the Haar measure $\mu$ of $\bbH_r$ is
the product of the Haar measures of $\Real^2$ and $\Torus$. Therefore we can decompose the Hilbert space $L^2(\bbH_r)$ as
\[ L^2(\bbH_r)\simeq L^2(\Real^2)\otimes_2(\ell^2\!-\!\bigoplus_{n\in\Zahl}\Cplx\chi_n)\simeq \ell^2\!-\!\bigoplus_{n\in\Zahl}L^2(\Real ^2)\otimes_2\Cplx\chi_n\ . \]
For each $n\in \Zahl$, let $\cH_n$ denote the subspace of $L^2(\bbH_r)$ which corresponds to $L^2(\Real ^2)\otimes_2\Cplx\chi_n$ via the above isomorphisms. Note that $\cH_n$ is invariant under the left regular representation $\lambda_{\bbH_r}$. Indeed, for $f\in L^2(\Real^2)$, we have:
\[ \begin{aligned}
\lambda(x,y,e^{2\pi i \theta})(f\otimes \chi_n)(x',y',e^{2\pi i \theta'})
 & = (f\otimes\chi_n)(-x+x',-y+y',e^{2\pi i (\theta'-\theta)}e^{\pi i (-xy'+x'y)})\\
 & = e^{-2\pi in\theta}e^{\pi i n(-xy'+x'y)}f(-x+x',-y+y')\chi_n(e^{2\pi i \theta'}).
\end{aligned} \]
 
Let $\pi_n$ denote the restriction of $\lambda_{\bbH_r}$ to $\cH_n$. Observe that $\pi_n(0,0,e^{2\pi i \theta})=e^{-2\pi i n \theta}I$ whenever $n\neq 0$; so by the Stone--von Neumann theorem $\pi_n$ is unitarily equivalent to  a direct sum of copies of $\sch_{-n}$.
Moreover, it is easy to see that the restriction of $\lambda_{\bbH_r}$ to $\cH_0$ coincides with $\lambda_0\defeq \lambda_{\Real^2}\circ Q$, where $\lambda_{\Real^2}$ is the left regular representation of $\Real^2$ and $Q:\bbH_r\rightarrow \bbH_r/{\mathcal Z}\iso \Real ^2$ is the natural quotient map.

Thus, if we let $\sch$ denote the direct sum of the Schr\"odinger representations for $n\neq 0$, we have shown that $\lambda_{\bbH_r}$ is quasi-equivalent to $ \lambda_0\oplus \sch$. These two representations are pairwise disjoint, in the sense that there are no non-zero intertwining maps between the representation spaces. As before, the results of Arsac (see Corollary 3.13 of \cite{Arsac} or \cite[Proposition 3.5.18]{Zwarich_MSc}) now yield
 the decomposition
\[ \FA(\bbH_r) = \FA_{\lambda_0}(\bbH_r) \oplus_1 \FA_{\sch}(\bbH_r), \]
as required.
 \hfill$\Box$

\end{section}


\bibliographystyle{siam}
\bibliography{WAAGbib}


\para{Original affiliation}\

\noindent Yemon Choi and Mahya Ghandehari

\noindent
Department of Mathematics and Statistics\\
McLean Hall, University of Saskatchewan\\
Saskatoon (SK), Canada S7N 5E6

\para{Current address for YC}\

\noindent
Department of Mathematics and Statistics\\
Fylde College, Lancaster University\\
Lancaster, United Kingdom LA1 4YF

\noindent
Email: \texttt{y.choi1@lancaster.ac.uk}

\para{Current address for MG}\

\noindent
The Fields Institute for Research in Mathematical Sciences\\
222 College Street\\
Toronto (ON), Canada M5T 3J1

\noindent
Email: \texttt{mghandeh@fields.utoronto.ca}

\end{document}